\newcommand*{\barfix}[2][.175ex]{%
  \mathpalette{\@barfix{#1}}{#2}%
}
\newcommand*{\@barfix}[3]{%
  \vbox{%
    \kern#1\relax
    \hbox{$#2#3\m@th$}%
  }%
}
\newtheorem{theorem}{Theorem}
\newtheorem{thm}{Theorem}[section]
\newtheorem{lemma}[thm]{Lemma}
\newtheorem{proposition}[thm]{Proposition}
\newtheorem{claim}[thm]{Claim}
\newtheorem{question}[thm]{Question}
\newcommand{\footremember}[2]{%
    \footnote{#2}
    \newcounter{#1}
    \setcounter{#1}{\value{footnote}}%
}
\providecommand\given{\nonscript\:\ifthenelse{\equal{\delimsize}{}}{\big\vert}{\delimsize\vert}\nonscript\:\mathopen{}}
\let\Pr\undefined
\DeclarePairedDelimiterXPP\Pr[1]{\mathbb{P}}(){}{#1}
\DeclarePairedDelimiterXPP\Ex[1]{\mathbb{E}}{[}{]}{}{#1}
\title{\vspace{-1.5cm}Minimum degree $k$ and $k$-connectedness usually arrive together}
\author{%
Sahar Diskin \footremember{alley}{\scriptsize{School of Mathematical Sciences, Tel Aviv University, Tel Aviv 6997801, Israel. Email: sahardiskin@mail.tau.ac.il.}}%
\and Anna Geisler \footremember{trailer}{\scriptsize{Institute of Discrete Mathematics, Graz University of Technology, Steyrergasse 30, 8010 Graz, Austria. Email: geisler@math.tugraz.at.}}
}
\begin{document}

\maketitle
\begin{abstract}
Let $d,n\in \mathbb{N}$ be such that $d=\omega(1)$, and $d\le n^{1-a}$ for some constant $a>0$. Consider a $d$-regular graph $G=(V, E)$ and the random graph process that starts with the empty graph $G(0)$ and at each step $G(i)$ is obtained from $G(i-1)$ by adding uniformly at random a new edge from $E$. 
We show that if $G$ satisfies some (very) mild global edge-expansion, and an almost optimal edge-expansion of sets up to order $O(d\log n)$, then for any constant $k\in \mathbb{N}$ in the random graph process on $G$, typically the hitting times of minimum degree at least $k$ and of $k$-connectedness are equal. This, in particular, covers both $d$-regular high dimensional product graphs and pseudo-random graphs, and confirms a conjecture of Joos from 2015. We further demonstrate that this result is tight in the sense that there are $d$-regular $n$-vertex graphs with optimal edge-expansion of sets up to order $\Omega(d)$, for which the probability threshold of minimum degree at least one is different than the probability threshold of connectivity.
\end{abstract}

\section{Introduction and main results}
Given a graph $G=(V,E)$, the \textit{random graph process} on $G$ starts with the empty graph $G(0)$ on $V$, and at each step $1\le i\le |E|$, $G(i)$ is obtained from $G(i-1)$ by adding a new edge chosen uniformly at random from $E\setminus E(G(i-1))$. The \textit{hitting time} of a monotone increasing (non-empty) graph property $\mathcal{P}$ is the random variable equal to the minimum index $\tau\coloneqq \tau(\mathcal{P})$ for which $G(\tau)\in \mathcal{P}$, but $G(\tau-1)\notin\mathcal{P}$. Note that when $G=K_n$, then $G(m)$ has the same distribution as $G(n,m)$, the random graph on $n$ vertices where we choose $m$ out of $\binom{n}{2}$ edges uniformly at random. 

In this paper, we provide sufficient conditions on a regular graph $G$, guaranteeing that in the random graph process on $G$ typically the hitting time of having minimum degree at least $k$ is equal to the hitting time of being $k$-connected. We further discuss the tightness of these conditions. Before stating our main result, let us give some motivation and discuss previous results. 

Note that, considering the random graph process on any graph $G$, the hitting time of being $k$-connected is (deterministically) at least the hitting time of having minimum degree $k$. Considering the random graph process on the complete graph, Bollob\'as and Thomason~\cite{BT85} (see also Erd\H{o}s and R\'enyi~\cite{ER66}) showed that the hitting times of minimum degree at least $k$ and $k$-connectedness are the same \textbf{whp}. Bollob\'as~\cite{B90} and Bollob\'as, Kohayakawa, and {\L}uczak~\cite{BKL93} demonstrated that the same holds for the $d$-dimensional hypercube $Q^d$. Joos \cite{J12, J15} further generalised this to Cartesian powers of a given graph (indeed, $Q^d$ is the Cartesian power of $d$ copies of $K_2$). The authors \cite{DG24} showed this for high dimensional Cartesian product graphs where the base graphs are regular and of bounded order for $k=1$ (that is, connectivity and minimum degree one). Moreover, for $p \in (0,1)$ consider the random graph $G_p$ obtained by retaining each edge of $G$ independently with probability $p$. Federico, van der Hofstad, and Hulshof \cite{FHH16} determined the probability threshold of connectivity for the Hamming graph $H(n,d)$ (where the asymptotic are in $n$), which is the Cartesian product of $d$ copies of $K_n$. Gupta, Lee, and Li \cite{GLL21} showed that for $d$-regular and $d$-edge-connected graphs on $n$ vertices when $d=\omega(\sqrt{n})$, the probability threshold for the random subgraph to be connected is asymptotically the same as the probability threshold for minimum degree at least one. 

Throughout the paper, all asymptotics are with $n \to \infty$.
Our first main result gives sufficient conditions on a $d$-regular $n$-vertex graph $G$, guaranteeing that \textbf{whp} the hitting times (in the random graph process on $G$) of minimum degree at least $k$ and $k$-connectedness are the same.
\begin{theorem}\label{th: sufficient}
Let $d,n \in \mathbb{N}$ be such that $d=\omega(1)$ and that there exists some constant $a>0$ such that $d\le n^{1-a}$. Let $\epsilon>0$ be a sufficiently small constant, and let $k\in \mathbb{N}$ be a constant. Then, for every constant $c>0$ there exists a sufficiently large constant $C\coloneqq C(c,k)>0$ such that the following holds. Let $G$ be a $d$-regular graph on $n$ vertices satisfying that
\begin{enumerate}[(P\arabic*{})]
    \item for every $U \subseteq V(G)$ with $|U| \leq n/2$, $e(U, U^C) \geq c |U|$; and, \label{p: global}
    \item for every $U \subseteq V(G)$ with $|U| \le Cd\log n$, $e(U, U^C) \geq \left(1-\epsilon\right) d |U|$. \label{p: local} 
\end{enumerate}
Consider the random graph process on $G$. Let $\tau_k$ be the hitting time of minimum degree at least $k$ and let $\tau_{kc}$ be the hitting time of $k$-connectedness. Then, \textbf{whp}, $\tau_k=\tau_{kc}$.
\end{theorem}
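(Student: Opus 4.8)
The starting point is that $\tau_k\le\tau_{kc}$ deterministically, since a $k$-connected graph on more than $k$ vertices has minimum degree at least $k$; thus it suffices to prove that \textbf{whp} $G(\tau_k)$ is $k$-connected, which I will do following the Bollob\'as--Thomason template adapted to the present expansion hypotheses. First I would locate the hitting time: the degree of a fixed vertex in $G(m)$ is hypergeometric with mean $2m/n$, well approximated by $\mathrm{Bin}\!\big(d,\tfrac{2m}{dn}\big)$, so a first/second moment computation on the number of vertices of degree below $k$ shows that \textbf{whp} $\tau_k$ lies in a short window $[m_-,m_+]$ and pins down $p:=m/|E(G)|$ up to a $1+o(1)$ factor. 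All I need from this is that the mean degree $\lambda:=dp$ satisfies $\lambda=\Theta(\min(d,\log n))=O(\log n)$ and $\log\tfrac1{1-p}\ge(1-o(1))\tfrac{\log n}{d}$, and that $p^{-1}=O(n)$. It then suffices to show that, \textbf{whp}, every $m\in[m_-,m_+]$ with $\delta(G(m))\ge k$ has $G(m)$ $k$-connected (which also gives $\tau_{kc}\le m_+$), and since the window is polynomially long it is enough to bound the failure probability at a fixed $m$ by $o(1/n)$.

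Fix such an $m$ and write $\Gamma:=G(m)$. I would first record two facts, each holding \textbf{whp}: (a) $\Delta(\Gamma)=O(\log n)$; and (b) for a suitable large constant $D_0$, the set $L:=\{v:\deg_\Gamma(v)\le D_0\}$ has $|L|=\mathrm{polylog}(n)$ and every two of its vertices are at $\Gamma$-distance at least $3$ --- both by routine first/second moment estimates, the ``spread out'' part using that $\Gamma$ is sparse (paths of length $\le2$ between prescribed vertices survive with small probability) together with \ref{p: local}. Now assume $\delta(\Gamma)\ge k$ and that $\Gamma$ has a cut $S$, $|S|\le k-1$; let $A$ be a smallest component of $\Gamma-S$ and $t:=|A|\le n/2$. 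If $t$ is at most a suitable large constant $t_1$, then each $v\in A$ has $N_\Gamma(v)\subseteq A\cup S$ and hence $\deg_\Gamma(v)\le t_1+k-2\le D_0$, so $A\subseteq L$; but then $t\ge 2$ forces an edge inside $\Gamma[A]$, contradicting (b), while $t=1$ gives $\deg_\Gamma(v)\le|S|\le k-1$, contradicting $\delta(\Gamma)\ge k$.

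For $t_1<t\le Cd\log n$ I would union bound over connected sets. Since $A$ is a component of $\Gamma-S$ it spans a tree in $\Gamma$ (its $t-1$ edges present) and, by \ref{p: local}, at least $(1-\epsilon)dt-(k-1)d\ge\tfrac{dt}{3}$ edges of $G$ from $A$ to $V\setminus(A\cup S)$ are absent in $\Gamma$. Summing over the at most $n(ed)^t$ subtrees of $G$ on $t$ vertices and the $\le n^{k-1}$ choices of $S$, the failure probability is at most $n^{k-1}\cdot n(ed)^t\cdot p^{t-1}(1-p)^{dt/3}$; the crucial gain is that the factor $p^{t-1}$ converts $(ed)^t$ into $(e\lambda)^t/p$, and since $\lambda=O(\log n)$ while $(1-p)^{dt/3}\le n^{-t/4}$ (by $\log\tfrac1{1-p}\ge(1-o(1))\tfrac{\log n}{d}$), the whole expression is $O\big(n^{k+1}n^{-t/8}\big)$; summing over $t>t_1$ for $t_1$ large enough yields $o(1/n)$, using $p^{-1}=O(n)$.

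The genuinely hard case --- the main obstacle --- is $Cd\log n<t\le n/2$. Once $d=\omega(\log n)$ we have $p=o(1)$, and for sets this large the only expansion $G$ provides is the constant-factor bound $e_G(A,V\setminus A)\ge c|A|$ of \ref{p: global}, which cannot be weighed against the $\binom nt=2^{\Theta(n)}$ choices of $A$ by any first-moment argument. Here I would instead argue by a multi-round exposure: reveal the edges in two batches, use the first to expose a dominant component (thereby limiting which sets $A$ can be ``left over''), and use \ref{p: global} in the form that such a left-over $A$ has $\ge c|A|$ boundary edges in $G$ and hence, \textbf{whp}, many more than $(k-1)\Delta(\Gamma)=O(\log n)$ surviving boundary edges --- so it cannot be cut off by $k-1$ vertices --- closing the remaining cases by an exploration/absorption argument on the shrinking family of uncovered sets rather than by a union bound. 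This is exactly where \ref{p: local} holding up to order $\Theta(d\log n)$, rather than merely $\Theta(d)$, is needed, so that the two methods overlap; the tightness construction in the theorem shows that expansion up to order $\Omega(d)$ alone does not suffice even for $k=1$. Finally one checks (a), (b) and the moderate-$t$ estimate all fail with probability $o(1/n)$, so the union over $[m_-,m_+]$ goes through and the theorem follows.
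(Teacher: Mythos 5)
Your handling of the small and medium obstructions is essentially the paper's (low\nobreakdash-degree vertices are spread out; a union bound over trees weighted by $p^{t-1}(1-p)^{\Omega(dt)}$ using (P2)), and you correctly isolate the real difficulty: ruling out a separation of two pieces of order larger than $Cd\log n$ by at most $k-1$ vertices. But for that case you do not give an argument. The statement that a candidate set $A$ with $e_G(A,A^C)\ge c|A|$ ``\textbf{whp} has many more than $(k-1)\Delta(\Gamma)$ surviving boundary edges'' is a statement about each of exponentially many sets $A$, i.e.\ exactly the union bound you concede is unavailable, and ``an exploration/absorption argument on the shrinking family of uncovered sets'' names no mechanism. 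The paper closes this case by sprinkling, with two concrete devices that are absent from your sketch. First, write $1-p=(1-p_1)(1-p_2)$ with $p_2=1/d$ and expose $G_{p_1}$ first; the gap lemma (applied to every $K$ with $|K|\le k$) shows that in $G_{p_1}[V\setminus K]$ every vertex lies either in a component of order at least $Cd\log n$ or is isolated, so any final separation must respect a partition $A\sqcup B$ of the union $W$ of the large first-round components (the isolated leftovers are absorbed into the side containing at least $d/3$ of their $G$-neighbours, using that their $G$-neighbourhoods lie essentially in $W\cup K$). The number of component-respecting partitions is at most $\binom{n/(Cd\log n)}{|A|/(Cd\log n)}\le \exp\{|A|/(Cd)\}$ --- an entropy reduction by the factor $Cd\log n$ which replaces the impossible union bound over all sets, and which is the actual reason (P2) must hold up to order $Cd\log n$. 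Second, since the sprinkled edges appear only with probability $1/d$, the $c|A|/2$ boundary edges of $G$ between the two enlarged sides, guaranteed by (P1), must be converted into $\Omega(c|A|/d)$ essentially independent chances; the paper does this with a matching lemma (and paths of length two through absorbed vertices), giving failure probability $\exp\{-\Omega(c|A|/d)\}$ per partition, which beats the entropy once $C=C(c,k)$ is large. Without these two ingredients your large-set paragraph restates the difficulty rather than resolving it, so the proposal has a genuine gap at its central step.

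A secondary, repairable issue is the reduction to a fixed $m$: the critical window for minimum degree $k$ has length $\omega(n)$, while some of your per-$m$ bounds (notably the ``spread-out'' property for vertices of degree at most a large constant $D_0$, whose natural failure probability is only $\mathrm{polylog}(n)/n$) are not $o(1)$ after a union bound over the window. The paper avoids this by proving a structural statement at a single $p$ just below the threshold (unique $k$-connected component; all exceptional vertices of degree less than $k$ and pairwise far apart in $G$) and then transferring to the hitting time by the standard argument that each subsequent edge at a low-degree vertex joins it to the unique $k$-connected core; you would need either this route or an argument at the window endpoints controlling the $O(n\,\omega(1))$ edges added across the window.
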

A few comments are in place. In a sense, we show that the phenomenon of typical equality between the hitting time of minimum degree at least $k$ and that of $k$-connectedness can be read off from the edge-expansion properties of the graph: a very mild `global' edge-expansion, and an almost optimal edge-expansion of sets up to order $O(d\log n)$. We note that while we treat $c$ as a constant, we may allow it to tend to zero by taking $C=C(c, k)$ to infinity. Further, we note that Properties \ref{p: global} and \ref{p: local} resemble similar assumptions in \cite{DK2401,DK2402}, under which the uniqueness of the giant component in a general setting was shown (and which were shown to be essentially tight). Finally, we remark that Theorem \ref{th: sufficient} confirms a conjecture of Joos \cite{J15} from 2015.

Observe that by Harper's inequality \cite{H64}, the hypercube $Q^d$ satisfies Properties \ref{p: global} and \ref{p: local}, and since $|V(Q^d)|=2^d$ and $Q^d$ is $d$-regular, it satisfies the degree assumptions of Theorem \ref{th: sufficient}. In fact, any Cartesian product of connected, regular, and bounded order base graphs satisfies the assumptions of Theorem \ref{th: sufficient} --- such graphs have degree of order logarithmic in the number of vertices, and satisfy Harper-like isoperimetric inequalities \cite{DEKKIso23, DS24}. Moreover, both Properties \ref{p: global} and \ref{p: local} can be guaranteed by proving that the second largest eigenvalue $\lambda_2$ of the adjacency matrix of $G$ satisfies $\lambda_2=o(d)$ (see \cite{AM85}), and thus \Cref{th: sufficient} covers $d$-regular expanders (for $d\le |V(G)|^{1-a}$ for some constant $a>0$). In particular, since a random $d$-regular graph has that $\lambda_2=O(\sqrt{d})$ (see \cite{S23} and references therein), for $d\le |V(G)|^{1-a}$ for some constant $a>0$, Theorem \ref{th: sufficient} covers random $d$-regular graphs.  

Note that some assumption on the `global' edge-expansion similar to \ref{p: global} is necessary, as otherwise the host graph itself could be disconnected (or very weakly connected), and thus there is no hope to show that in the random graph process, \textbf{whp} the hitting times of minimum degree at least $k$ and $k$-connectedness coincide. Our second main result shows that Property \ref{p: local} is also optimal, up to the $O(\log n)$ factor.
\begin{theorem}\label{th: tight}
Let $d,n\in \mathbb{N}$ such that $d=\omega(1)$ and $d\le \sqrt{n}$. Let $\epsilon>0$ be a small constant. There exists a $d$-regular $n$-vertex graph $G$, such that the following holds. For every $U\subseteq V(G)$ with $|U|\le n/2$, $e(U,U^C)\ge \frac{|U|}{10}$, and for every $U\subseteq V(G)$ with $|U|\le \frac{\epsilon d}{3}$, $e(U,U^C)\ge (1-\epsilon)d|U|$. Consider the random graph process on $G$. Let $\tau_1$ be the hitting time of minimum degree at least one, and let $\tau_{conn}$ be the hitting time of connectivity. Then, \textbf{whp}, $\tau_1<\tau_{conn}$.
\end{theorem}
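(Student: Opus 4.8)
\emph{Proof proposal for Theorem~\ref{th: tight}.} The plan is to build $G$ from $t\approx n/s$ dense ``blobs'' $B_1,\dots,B_t$, each on $s=d+1$ vertices, joined according to a bounded‑degree expander, so that each blob sends only $g=\Theta(d)$ edges to the rest of $G$: few enough that, at the moment the last isolated vertex disappears, some blob is typically still entirely separated from the rest of the graph. Concretely, fix a large constant $\Delta$ and a $\Delta$‑regular graph $H$ on $t:=\lfloor n/s\rfloor$ vertices whose edge‑expansion constant is at least $\rho\Delta$ for a constant $\rho>3/10$ (a random $\Delta$‑regular graph works, by the spectral bound). Choose an integer $g=\Theta(d)$ with $s/(10\rho)\le g\le d/3$ (possible since $\rho>3/10$), rounded so that $g$ is even and divisible by $\Delta$. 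Let each blob be $K_s$ with a matching of $g/2$ edges deleted --- leaving exactly $g$ vertices of internal degree $d-1$ --- and for each edge of $H$ place $g/\Delta$ edges between the deficient vertices of the two corresponding blobs, every deficient vertex receiving exactly one external edge; a bounded number of blobs are adjusted to absorb the divisibility defects of $n,s,g,\Delta t$. Then $G$ is $d$‑regular, connected, has $|E(G)|=nd/2$ edges, and each $B_i$ sends exactly $g$ edges to its complement.

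\emph{Expansion.} For any $d$‑regular graph and $|U|\le\epsilon d/3$ one has $e(U,U^C)=d|U|-2e(U)\ge d|U|-|U|(|U|-1)\ge(1-\epsilon)d|U|$, so the local property holds automatically. For the global property, write $U_i=U\cap B_i$ and split the blobs into empty, partial, and full. A partial blob contributes at least $\tfrac14 s\min(|U_i|,s-|U_i|)$ crossing edges, since $K_s$ minus a sparse matching is almost complete and hence strongly edge‑expanding; these terms cover all of $U$ except its mass in almost‑full blobs. That remaining mass is handled by rounding $U$ to a union $\widehat U$ of full blobs, using $e(\widehat U,\widehat U^C)=(g/\Delta)e_H(W,W^C)\ge g\rho\min(|W|,t-|W|)$ where $W$ indexes the rounded‑up blobs, and bounding $|e_G(U,U^C)-e_G(\widehat U,\widehat U^C)|\le|U\triangle\widehat U|$ because each vertex lies in at most one inter‑blob edge; since $g\ge s/(10\rho)$ this yields $e(U,U^C)\ge|U|/10$ in every case. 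I expect this case analysis, though routine, to be the most tedious step, and the constant $1/10$ in the statement is exactly what the choices of $\rho$ and $g$ are tuned to.

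\emph{The hitting times.} Work with the snapshot $G(m)$, a uniformly random $m$‑edge subgraph of $G$. Since $G$ is $d$‑regular the probability that a fixed vertex is isolated in $G(m)$ is $\binom{|E|-d}{m}/\binom{|E|}{m}$, and since each blob has exactly $g$ boundary edges the probability that a fixed blob sends no edge to its complement is $\binom{|E|-g}{m}/\binom{|E|}{m}$; writing $m=(1-\beta)|E|$ these equal $(1+o(1))\beta^{d}$ and $(1+o(1))\beta^{g}$ respectively (a routine estimate, valid as $k^2=o(|E|\beta)$ for $k\le d$). Take $\beta:=n^{-1/d}(\ln n)^{-1/d}$. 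Then the expected number of isolated vertices in $G(m)$ is $(1+o(1))n\beta^{d}=(1+o(1))/\ln n=o(1)$, so whp $\tau_1\le m$ by Markov. The expected number of blobs with no boundary edge present in $G(m)$ is $(1+o(1))(n/s)\beta^{g}\ge n^{1/2-1/3-o(1)}=n^{1/6-o(1)}\to\infty$ (using $g/d\le1/3$ and $s\le\sqrt n+1$), and a second‑moment estimate makes this count concentrated: the boundary edge sets of two blobs are disjoint unless the blobs are $H$‑adjacent, there are only $O(t)$ such pairs, and $\beta^{-g/\Delta}=n^{O(1/\Delta)}$, so for $\Delta$ large the $H$‑adjacent pairs contribute negligibly to the second moment. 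Hence whp $G(m)$ has a blob $B$ sending no edge to $V\setminus B$. On the intersection of these two whp events, $B$ is a union of components of $G(\tau_1)$ (as $\tau_1\le m$ and edges are only added over time), so $G(\tau_1)$ is disconnected; combined with the deterministic bound $\tau_{conn}\ge\tau_1$, this gives $\tau_1<\tau_{conn}$ whp.

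\emph{Main obstacle.} The crux is that this threshold window is sharp: one cannot afford even a constant‑factor loss in $\beta$ (say replacing $\beta$ by $\beta/2$), because $\beta$ is raised to the power $g=\Theta(d)=\omega(1)$, so the whole argument must live in the narrow band $t^{-1/g}<\beta<n^{-1/d}$. This band is nonempty precisely when $g<d\log t/\log n=d\bigl(1-\log s/\log n\bigr)(1+o(1))$, and it is exactly the hypothesis $d\le\sqrt n$ that guarantees $\log s/\log n\le\tfrac12+o(1)$, leaving room for a $g=\Theta(d)$ that is simultaneously $\ge s/(10\rho)$, as the global expansion demands. Threading these two competing constraints on $g$ --- and carrying the $(1+o(1))$ factors through the second moment at the very edge of the threshold for $\tau_1$ --- is the heart of the proof.
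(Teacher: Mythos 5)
Your proposal is correct in substance and rests on the same underlying mechanism as the paper's proof --- blobs that are cliques on $d+1$ vertices minus a matching, glued by an expander so that each blob has only $cd$ boundary edges for a constant $c<1/2$, so that at the density where isolated vertices have just disappeared (first moment, $(1-p)^d\approx 1/(n\log)$) there are still, in expectation, polynomially many blobs whose entire boundary is absent (using $d\le\sqrt n$ exactly as the paper does, with your $c\le 1/3$ playing the role of the paper's $9/19$) --- but the execution differs in several genuine ways. The paper attaches each blob $F(v)$ to a single hub vertex $v$ of a \emph{dense} $d_1$-regular pseudo-random graph $H$ with $d_1=\tfrac{10}{19}d$, so the blob-disconnection events are exactly independent in $G_p$ and $\Pr{X=0}$ factors with no second moment needed, and it imports the global expansion constant $1/10$ wholesale from Claim 5.1 of \cite{DEKKIso23}; you instead glue blobs directly to one another along a bounded-degree expander, which forces you to run a Chebyshev/second-moment argument (correct as sketched: non-adjacent blobs have disjoint boundary sets and the $O(t)$ adjacent pairs contribute $O(t\beta^{2g-g/\Delta})=o(\Ex{X}^2)$), and to prove the global expansion yourself via the blob-rounding argument. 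Your observation that the local property is automatic for any $d$-regular graph when $|U|\le\epsilon d/3$ is valid and is actually a shortcut relative to the paper's explicit verification, and working with the uniform snapshot $G(m)$ rather than $G_p$ lets you read off $\tau_1<\tau_{conn}$ without the (standard, unstated) transfer the paper needs. Two small caveats: the inequality $|e_G(U,U^C)-e_G(\widehat U,\widehat U^C)|\le|U\triangle\widehat U|$ is false as literally written (intra-blob edges incident to $U\triangle\widehat U$ can shift the cut by far more than $|U\triangle\widehat U|$); what your justification actually supports, and all you need, is the one-sided bound on the \emph{inter-blob} cut edges, $e^{\mathrm{inter}}_G(U,U^C)\ge (g/\Delta)e_H(W,W^C)-|U\triangle\widehat U|$, with the intra-blob cut edges of partial blobs counted separately --- with that reading the case analysis closes (the near-full-blob cases are absorbed because a blob missing $\eta s$ vertices contributes $\Omega(\eta s^2)$ internal cut edges, dominating both the rounding error and the shortfall when $|W|>t/2$). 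Also, ``a bounded number of blobs are adjusted'' glosses over up to $d=\omega(1)$ leftover vertices; this is acceptable only in the same spirit as the paper's explicit divisibility assumption ($(d+2)\mid n$), which you should state rather than absorb.
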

We remark that there are some suitable parity assumptions on $n$ and $d$ which we leave implicit here in order to simplify the statement of \Cref{th: tight}, and that the construction we give to prove Theorem \ref{th: tight} follows closely a construction from \cite{DEKKIso23}. We note that, in fact, we show that in the random subgraph the probability threshold of minimum degree at least one is smaller by a multiplicative constant than the probability threshold of connectivity. Finally, we believe it is interesting to compare Theorem \ref{th: tight} with the results of Gupta, Lee, and Li \cite{GLL21}. Indeed, Theorem \ref{th: tight} shows that assuming $G$ is $d$-edge-connected, for $d\le \sqrt{n}$, does not suffice to guarantee that the probability threshold for the random subgraph to be connected is asymptotically the same as the probability threshold for minimum degree at least one, and indeed Gupta, Lee, and Li \cite{GLL21} considered $d$-regular $d$-edge-connected graphs on $n$ vertices when $d=\omega(\sqrt{n})$.
 
The paper is structured as follows. In Section \ref{s: prelim} we set out some notation and a couple of lemmas which we will use throughout the paper. Section \ref{s: sufficient} is devoted to the proof of Theorem \ref{th: sufficient}, and in Section \ref{s: tight} we prove Theorem \ref{th: tight}. Finally, in Section \ref{s: discussion} we discuss our results and consider avenues for future research.

\section{Preliminaries}\label{s: prelim}
Given a graph $G=(V,E)$ and a subset $S\subseteq V$, we denote by $G[S]$ the induced graph on $S$ and by $N_G(S)$ the external neighbourhood of $S$ in $G$, that is, the set of vertices $v\in V\setminus S$ for which there exists $u\in S$ such that $uv \in E$. We denote by $E_G(S, S^C)$ the set of edges in $G$ with exactly one endpoint in $S$, and let $e_G(S,S^C)=|E_G(S,S^C)|$.
Given $v\in V$, we denote by $deg_G(v,S)$ the number of neighbours of $v$ in $S$.
When the graph in question is obvious, we may omit the subscript. Throughout the paper, all logarithms are with the natural base. We omit rounding signs for the sake of clarity of presentation.

We note that throughout the paper (and, in particular, in the statements of our main results), we make a fairly standard abuse of notation: when considering a $d$-regular $n$-vertex graph $G$, we in fact consider a sequence $(d_k, n_k)_{k\in\mathbb{N}}$ of pairs $(d_k,n_k)\in \mathbb{N}^2$ giving rise to a sequence $(G_k)_{k\in\mathbb{N}}$ of $d_k$-regular $n_k$-vertex graphs. For example, a statement such as `let $d=\omega(1)$, let $n\ge d^2$ [...] there exists a $d$-regular $n$-vertex graph such that [...]' should be interpreted as `for all sequences $(d_k, n_k)_{k\in \mathbb{N}}$ such that for every positive constant $C$, there exists $k_0$ such that for every $k\ge k_0$, and every pair $(d_k,n_k)\in \mathbb{N}^2$ satisfying $d_{k}\ge C$ and $n_k\ge d_k^2$ [...] there exists a sequence $(G_k)_{k\in\mathbb{N}}$ of $d_k$-regular $n_k$-vertex graphs such that[...]'.

We will utilise the following bound on the number of trees on $m$ vertices in a $d$-regular graph that are rooted at a fixed vertex. 
\begin{lemma}[See, for example, Lemma 2 in \cite{BFM98}]\label{l: trees}
Let $G$ be a $d$-regular graph, let $m$ be a positive integer and let $v\in V(G)$. Denote by $t_m(v, G)$ the number of trees on $m$ vertices rooted at $v$ in $G$. Then $t_m(v, G)\le (ed)^{m-1}$.
\end{lemma}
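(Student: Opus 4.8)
The plan is to reduce the count to the infinite $d$-regular tree and then read off the answer from a generating function.

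I would first pass to the universal cover: let $\mathbb{T}_d$ denote the infinite $d$-regular tree, let $p\colon \mathbb{T}_d\to G$ be a universal covering map of the component of $v$ in $G$ (which is a connected $d$-regular graph), and fix a preimage $\tilde v$ of $v$. Since a tree is simply connected, the inclusion of any subtree $T\subseteq G$ with $v\in V(T)$ lifts to an embedding of $T$ into $\mathbb{T}_d$ sending $v$ to $\tilde v$, and applying $p$ recovers $T$ from its lift; hence distinct subtrees of $G$ through $v$ have distinct lifts, and it is enough to bound $t_m(\tilde v,\mathbb{T}_d)$. As a warm-up for the bound itself, one can note that every $m$-vertex subtree of $G$ rooted at $v$ with a chosen ordering of children at each vertex is the image of a plane-tree shape (there are $C_{m-1}$ of these, the $(m-1)$-th Catalan number) under an embedding into $G$ fixing $v$, and such an embedding is built vertex by vertex with at most $d$ choices for each new vertex; this already gives $t_m(v,G)\le C_{m-1}d^{m-1}\le(4d)^{m-1}$.

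To get the base $e$ rather than $4$, I would instead compute $t_m(\tilde v,\mathbb{T}_d)$ exactly. Let $g(x)=\sum_{k\ge 1}g(k)x^k$ be the generating function for the number $g(k)$ of $k$-vertex subtrees hanging below a fixed non-root vertex of $\mathbb{T}_d$ (which has $d-1$ available downward neighbours). The recursive structure of rooted subtrees gives $g(x)=x\bigl(1+g(x)\bigr)^{d-1}$, and therefore $\sum_{m\ge 1}t_m(\tilde v,\mathbb{T}_d)\,x^m=x\bigl(1+g(x)\bigr)^{d}=g(x)\bigl(1+g(x)\bigr)$. Lagrange inversion then yields the closed form
\[
t_m(\tilde v,\mathbb{T}_d)=\frac1m\binom{(d-1)m}{m-1}+\frac2m\binom{(d-1)m}{m-2}.
\]
Finally, bounding the binomial coefficients via $\binom Nk\le N^k/k!$ and $k!\ge(k/e)^k$ shows each term is at most a small constant times $m^{-1}\bigl((d-1)e\bigr)^{m-1}$; since $(d-1)e<ed$ and $d=\omega(1)$, the sum is at most $(ed)^{m-1}$ for all $m\ge 4$, while the remaining cases $m\in\{1,2,3\}$ (where $t_m$ equals $1$, $d$, and $\tfrac32 d(d-1)$ in $\mathbb{T}_d$, respectively) are immediate.

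The hard part is precisely pinning the constant to $e$: the crude shape-and-embedding bound overcounts because shapes with a lot of internal branching, though plentiful, admit far fewer embeddings (once a vertex has several previously placed tree-neighbours, its later children have fewer free slots), and capturing this trade-off cleanly seems to require either the exact Lagrange-inversion formula above or a carefully primed induction on $m$. A minor additional nuisance is that the generic binomial estimate is too wasteful for the smallest values of $m$, which therefore have to be handled by hand.
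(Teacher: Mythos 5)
Your proposal is correct, but note that the paper does not actually prove this lemma --- it is quoted as a known result (Lemma 2 in [BFM98]), so there is no internal proof to match. Your argument is a legitimate, self-contained derivation: the reduction via the universal cover is sound (a subtree through $v$ is simply connected, so its inclusion lifts injectively to $\mathbb{T}_d$, and distinct subtrees have distinct lifts), the recursion $g(x)=x(1+g(x))^{d-1}$ and the identity $\sum_m t_m(\tilde v,\mathbb{T}_d)x^m=g(1+g)$ are right, and Lagrange inversion does give $t_m=\tfrac1m\binom{(d-1)m}{m-1}+\tfrac2m\binom{(d-1)m}{m-2}$ (this checks out for $m=1,2,3,4$). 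The standard route behind the citation is lighter: one compares with the infinite $d$-ary tree and uses the Catalan/Raney-type count $t_m\le\tfrac1m\binom{dm}{m-1}$, then $\tfrac1m\binom{dm}{m-1}\le\tfrac{(dm)^{m-1}}{m!}\le\tfrac{e}{m}(ed)^{m-1}\le(ed)^{m-1}$ for $m\ge 3$, with $m\le 2$ trivial; your exact formula is a (slightly heavier) refinement of the same idea. Two small blemishes: invoking $d=\omega(1)$ is both unnecessary and out of place, since the lemma is stated for an arbitrary $d$-regular graph --- in fact your own estimates close without it, as
\[
t_m\le\bigl((d-1)e\bigr)^{m-1}\cdot\frac{e}{m}\Bigl(1+\frac{2}{d-1}\Bigr),
\]
and $\frac{e}{m}\bigl(1+\frac{2}{d-1}\bigr)\bigl(1-\frac1d\bigr)^{m-1}\le 1$ for all $m\ge 4$ and $d\ge 2$; and the final sentence should state the $m\in\{1,2,3\}$ verifications explicitly rather than gesture at them, though they are indeed immediate ($1\le 1$, $d\le ed$, $\tfrac32 d(d-1)\le e^2d^2$).
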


We will also make use of the following lemma, allowing one to find large matchings in percolated subgraphs, which follows immediately from \cite[Lemma 3.8]{DEKK24}.
\begin{lemma}\label{l: matching}
Let $d=\omega(1)$ and let $G$ be a $d$-regular graph. Let $\delta_1>0$ be a constant, let $q=\frac{\delta_1}{d}$, and let $s=\Omega(d)$.
Let $F\subseteq E(G)$ be such that $|F|\ge s$. Then, there exists a constant $\delta_2=\delta_2(\delta_1)>0$ such that $F_{q}$, a random subset of $F$ obtained by retaining each edge independently with probability $q$, contains a matching of size at least $\frac{\delta_2s}{d}$ with probability at least $1-\exp\left\{-\frac{\delta_2s}{d}\right\}$.
\end{lemma}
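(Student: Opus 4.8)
The plan is to prove \Cref{l: matching} directly; it can also be deduced from \cite[Lemma 3.8]{DEKK24} by specialising the parameters, and the argument below follows the same lines. Let $(X_e)_{e\in F}$ be independent $\mathrm{Bernoulli}(q)$ indicators, with $X_e=1$ iff $e\in F_q$, so that $F_q$ is a function of these $|F|\ge s$ bits, and write $\mu=\mu(F_q)$ for the size of a maximum matching of $F_q$. I would establish the lemma in two steps: a first-moment bound $\Ex{\mu}\ge \gamma\, s/d$ for some constant $\gamma=\gamma(\delta_1)>0$, and a lower-tail concentration bound for $\mu$ with failure probability $\exp(-\Omega(s/d))$.

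For the first moment, fix a uniformly random linear order $\sigma$ of $F$, independent of $F_q$, and run the greedy matching process in the order $\sigma$: process the edges of $F$ one by one, adding the current edge $e$ to a matching $M$ exactly when $e\in F_q$ and $e$ is vertex-disjoint from all edges already in $M$. Then $M$ is a matching of $F_q$, so $\mu\ge|M|$. The point is that it suffices, for $e$ to be added, that $e\in F_q$ and that $e$ precedes in $\sigma$ every edge of $F_q$ sharing a vertex with $e$; one must resist asking instead that $e$ precede \emph{all} of its neighbours in $G$, which would cost a factor of $d$. Since $G$ is $d$-regular, $e$ shares a vertex with $m_e<2d$ edges of $F$; let $J_e\sim\mathrm{Bin}(m_e,q)$ count those lying in $F_q$. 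Conditioned on $F_q$ with $e\in F_q$, the probability that $e$ is first in $\sigma$ among itself and its $J_e$ present neighbours is $1/(J_e+1)$, and since $\sigma$ is independent of $F_q$ while $\{e\in F_q\}$ is independent of $J_e$, convexity of $x\mapsto 1/(x+1)$ together with $\Ex{J_e}=m_e q<2\delta_1$ gives
\begin{equation*}
\Pr{e\text{ is added to }M}\ \ge\ \Pr{e\in F_q}\cdot\Ex{\tfrac{1}{J_e+1}}\ \ge\ q\cdot\tfrac{1}{\Ex{J_e}+1}\ \ge\ \tfrac{q}{2\delta_1+1}.
\end{equation*}
Summing over all $e\in F$ and using $|F|\ge s$ yields $\Ex{\mu}\ge\Ex{|M|}\ge \tfrac{qs}{2\delta_1+1}=\tfrac{\delta_1}{2\delta_1+1}\cdot\tfrac{s}{d}=:\gamma\, s/d$.

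For the concentration I would use that $\mu$ is a \emph{self-bounding} function of $(X_e)_{e\in F}$: deleting one edge from $F_q$ changes $\mu$ by $0$ or $1$, and the edges whose deletion strictly decreases $\mu$ all belong to a single maximum matching of $F_q$, so there are at most $\mu$ of them. The standard lower-tail inequality for self-bounding functions then gives $\Pr{\mu\le\Ex{\mu}-t}\le\exp\bigl(-t^{2}/(2\Ex{\mu})\bigr)$; taking $t=\Ex{\mu}/2$ and using $\Ex{\mu}\ge\gamma s/d$ yields $\Pr{\mu<\gamma s/(2d)}\le\exp(-\gamma s/(8d))$. With $\delta_2\coloneqq\min\{\gamma/8,\delta_1\}$ this already gives a matching of size at least $\delta_2 s/d$ with probability at least $1-\exp(-\delta_2 s/d)$ whenever $\gamma s/(2d)\ge 1$; in the complementary range, where the target size is below $1$, it suffices to note $\Pr{F_q=\emptyset}\le e^{-q|F|}\le e^{-\delta_1 s/d}\le e^{-\delta_2 s/d}$, so a single surviving edge already does.

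The delicate point is obtaining the failure probability of the correct order $\exp(-\Omega(s/d))$. A Markov or second-moment estimate on the number of edges that the line graph of $G$ induces on $F_q$ would bound the independence number of $F_q$ but only with a constant failure probability, and a plain bounded-differences (McDiarmid) inequality in the $|F|\ge s$ coordinates $(X_e)$ would give the far too weak $\exp(-\Omega(s/d^{2}))$; it is the self-bounding, equivalently Talagrand-certifiable, structure of the maximum-matching function --- whose effective variance is of order $\Ex{\mu}=\Theta(s/d)$ rather than of order $s$ --- that delivers the stated bound. The first-moment step, by contrast, is routine once one works with the "precedes all \emph{present} neighbours" event rather than the wasteful one.
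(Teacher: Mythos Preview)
The paper does not prove this lemma itself; it merely states that it ``follows immediately from \cite[Lemma~3.8]{DEKK24}''. Your argument is correct and self-contained: the first-moment lower bound via the randomly ordered greedy matching (using that it suffices for $e$ to precede only its \emph{present} neighbours, and Jensen on $x\mapsto 1/(x+1)$) is valid, and the lower-tail concentration via the self-bounding property of the maximum-matching size is the right tool here --- your observation that an edge whose deletion strictly decreases $\mu$ must lie in \emph{every} maximum matching, hence there are at most $\mu$ such edges, is exactly what verifies the self-bounding condition $\sum_e(\mu-\mu_e)\le\mu$. Since you yourself note that this follows the same lines as the cited lemma, there is no genuine divergence from the paper: you have simply supplied the proof that the paper outsourced. (The edge-case discussion for $\gamma s/(2d)<1$ is harmless but in fact unnecessary: the self-bounding lower-tail bound $\Pr{\mu\le\Ex{\mu}-t}\le\exp(-t^2/(2\Ex{\mu}))$ holds for all $0<t\le\Ex{\mu}$, and for $t>\Ex{\mu}$ the event is empty since $\mu\ge 0$.)
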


\section{Proof of Theorem \ref{th: sufficient}}\label{s: sufficient}
Throughout this section let $G=(V, E)$, $d, n, a, k$ be as in Theorem \ref{th: sufficient}. \Cref{th: sufficient} will follow from the next proposition.
\begin{proposition}\label{prop: sufficient}
Let $\phi$ be a function tending arbitrarily slowly to infinity with $n$. Let $p= 1-\left(\frac{\phi}{n}\right)^{1/d}$. Then, \textbf{whp}, $G_p$ has a unique $k$-connected component. All the vertices outside this component, if there are any, have degree at most $k-1$ in $G_p$ and are at distance at least two from each other in $G$. 
\end{proposition}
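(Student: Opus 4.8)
The plan is to deduce \Cref{prop: sufficient} from three \textbf{whp} statements about the set $L\coloneqq\{v\in V:\deg_{G_p}(v)\le k-1\}$ of low-degree vertices: that \textbf{whp} (a) $L$ is independent in $G$ (so any two of its vertices are at distance at least two in $G$), and $|L|=o(n)$; (b) $G_p-L$ has minimum degree at least $k$, i.e.\ every $v\notin L$ has at least $k$ neighbours outside $L$ in $G_p$; and (c) $G_p-L$ is $k$-connected. These suffice, since a $k$-connected graph on at least $k+1$ vertices has minimum degree at least $k$: so every $k$-connected subgraph of $G_p$ is disjoint from $L$ and hence contained in $G_p-L$, and, as $|V\setminus L|>k+1$, (c) makes $G_p-L$ the \emph{unique} $k$-connected component, while (a) and (b) identify the vertices outside it as exactly those of $L$, pairwise at distance at least two.

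Statements (a) and (b) follow from first-moment estimates. As $(1-p)^d=\phi/n$ and $dp=(1+o(1))\log(n/\phi)$, a fixed vertex $v$ satisfies $\deg_{G_p}(v)\le k-1$ with probability $(1+o(1))\binom{d}{k-1}p^{k-1}(1-p)^{d-k+1}\le n^{-1+o(1)}$ (using $d=\omega(1)$ to absorb $(1-p)^{-(k-1)}=n^{o(1)}$, and taking $\phi$ to grow slowly enough that $\phi(\log n)^{k-1}=n^{o(1)}$). A union bound over $v$ with Markov's inequality gives $|L|=o(n)$. For (a): for an edge $uv$ of $G$ the events $\{u\in L\}$ and $\{v\in L\}$ depend on edge-sets that meet in at most the edge $uv$, so their conjunction has probability $n^{-2+o(1)}$, and summing over the at most $n^{2-a}$ edges of $G$ gives $o(1)$. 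For (b): if $v\in V\setminus L$ had fewer than $k$ neighbours outside $L$ in $G_p$, it would have a present edge to some $w\in L$, and ``$v$ has a present edge to $w$ and $w\in L$'' has probability $n^{-2+o(1)}$ by the displayed estimate applied at both endpoints; summing over $v$ gives $o(1)$.

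The heart of the proof is (c). If $G_p-L$ is not $k$-connected, Menger's theorem yields a set $S$ with $|S|\le k-1$, $S\cap L=\emptyset$, and a smallest component $A$ of $(G_p-L)-S$, so that $A$ is connected in $G_p$, $A\cap(L\cup S)=\emptyset$, $2\le|A|\le n/2$, and every present edge of $G_p$ leaving $A$ ends in $S\cup L$. For $|A|\le Cd\log n$ we run a union bound: by \Cref{l: trees} there are at most $n(ed)^{|A|-1}$ pairs of a connected $A$ and a spanning tree of $G[A]$; crucially one may take $S\subseteq N_G(A)$, giving only $(d|A|)^{k-1}$ choices for $S$ rather than the wasteful $n^{k-1}$; and, revealing edges in stages, one pays $p$ for each tree edge, $1-p=(\phi/n)^{1/d}$ for each absent external edge of $A$, and a factor $n^{-1+o(1)}$ for each vertex of $L$ that ``absorbs'' a present external edge of $A$ (each such vertex absorbs at most $k-1$ of them). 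By \ref{p: local} there are at least $(1-\epsilon)d|A|$ edges leaving $A$, of which at most $(k-1)|A|$ enter $S$; balancing the absent edges against the absorbing $L$-vertices and summing over $|A|\ge 2$ yields $o(1)$, using that $\epsilon$ is small, $d=\omega(1)$, and $d\le n^{1-a}$ --- the near-optimality in \ref{p: local} being exactly what forces enough external edges.

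For $|A|>Cd\log n$ the tree-counting is too lossy, since at scales comparable to $n$ the factor $(ed)^{|A|}$ overwhelms the edge-absence probabilities $(\phi/n)^{\Theta(|A|)/d}$. Here we instead build on the uniqueness-of-the-giant theory developed under hypotheses of the form \ref{p: global}--\ref{p: local} in \cite{DK2401,DK2402}, via a two-round exposure: the first round already produces a unique component on all but $n^{1-\Omega(1)}$ vertices with robust large-scale edge-expansion (no $(k-1)$-separation leaving a part of size between $Cd\log n$ and $n/2$), and sprinkling the remaining edges --- using \Cref{l: matching} to route $k$ vertex-disjoint connections out of every not-yet-absorbed vertex --- promotes this component, with $L$ deleted, to a $k$-connected graph on all of $V\setminus L$. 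The main obstacle is (c): at small scales, the delicate bookkeeping needed to disentangle the separator $S$, the cut-off set $A$, and the random low-degree set $L$ (whose vertices absorb $A$'s external edges and whose membership depends on edges far from $A$) inside a single union bound; at large scales, the failure of naive counting, which forces reliance on the global structure theory of \cite{DK2401,DK2402}. This is the analogue, complicated by the degree-$(k-1)$ vertices and a separator of size up to $k-1$, of the uniqueness results proved there under properties resembling \ref{p: global}--\ref{p: local}.
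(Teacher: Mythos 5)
Your overall architecture (low-degree vertices pairwise far apart, no mid-size obstructions, a two-round exposure for large scales) matches the paper's, but the heart of your step (c) is not actually carried out, and in both of its halves the missing piece is the key idea. At small scales you work with a component $A$ of $(G_p-L)-S$ where both the separator $S$ and the low-degree set $L$ are random, so present edges leaving $A$ may be ``absorbed'' by $L$-vertices whose membership in $L$ depends on the very edges being revealed; you name this bookkeeping as ``the main obstacle'' rather than resolving it. The paper removes the obstacle structurally: it proves the gap statement (\Cref{l: gap}) for every \emph{deterministic} set $K$ with $|K|\le k$, taking a union bound over the at most $n^k$ choices of $K$. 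A component of $G_p[V\setminus K]$ has, by definition, no present edges leaving it except into $K$, so by \ref{p: local} at least $(1-\epsilon)ds-kd$ external edges must simply be absent --- no absorbing $L$-vertices ever enter the computation, and the low-degree vertices reappear only as isolated vertices of $G_p[V\setminus K]$, handled by \Cref{l: distance}. (A secondary flaw: your justification of (b) is wrong as written --- the event ``$vw$ present and $w\in L$'' has probability $p\cdot n^{-1+o(1)}$, not $n^{-2+o(1)}$, and summing over the $\Theta(nd)$ edges gives $n^{o(1)}dp$, which is not $o(1)$; indeed for $k\ge 2$ such edges typically exist. A correct argument must also use the degree constraint at $v$, essentially the computation of \Cref{l: distance}.)

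At large scales your argument is an appeal to \cite{DK2401,DK2402} together with the unproven assertion that the first round already yields ``no $(k-1)$-separation leaving a part of size between $Cd\log n$ and $n/2$''; that assertion is essentially the statement to be proved, and those papers concern uniqueness of the giant component, not connectivity after deleting an arbitrary $(k-1)$-set. The paper instead gives a self-contained sprinkling argument: write $1-p=(1-p_1)(1-p_2)$ with $p_2=1/d$, fix a deterministic $K$, and for each partition $A\sqcup B$ of the union of the components of $G_{p_1}[V\setminus K]$ of order at least $Cd\log n$ that \emph{respects components}, extend it to $A'\sqcup B'=V\setminus K$ using that every isolated vertex of $G_{p_1}[V\setminus K]$ has at least $d/3$ of its $G$-neighbours on one side; then \ref{p: global} gives $e_G(A',B')\ge c|A|/2$, two applications of \Cref{l: matching} produce $\Omega(c|A|/d)$ paths of length at most two from $B'$ into $A$, followed by direct $p_2$-edges into $B$, so the failure probability is $\exp\{-\Omega(c|A|/d)\}$. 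Crucially, the union bound is only over component-respecting partitions, of which there are at most $\binom{n/(Cd\log n)}{|A|/(Cd\log n)}$, and this restricted count is exactly what the bound $\exp\{-\Omega(c|A|/d)\}$ can beat once $C=C(c,k)$ is large. None of these ingredients (the restricted partition count, the $A'/B'$ extension, the two-stage matching construction) appear in your sketch, and without them the union bound over partitions and separators does not close.
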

Observe that when $p=1-\left(\frac{\phi}{n}\right)^{1/d}$, we have that
\begin{align*}
   \Pr{Bin(d,p)\le k-1}=\sum_{i=0}^{k-1}\binom{d}{i}p^i(1-p)^{d-i}=\omega(1/n),
\end{align*}
and thus the expected number of vertices whose degree in $G_p$ is at most $k-1$ tends to infinity. Further, since by \Cref{prop: sufficient} \textbf{whp} every two vertices of degree at most $k-1$ in $G_p$ are at distance at least two from each other in $G$, any edge added in the random graph process that might turn a vertex of degree less than $k-1$ to a vertex of degree $k$ \textbf{whp} is an edge connecting it to the unique $k$-connected component. Then, Theorem \ref{th: sufficient} follows from Proposition \ref{prop: sufficient} using standard methods (see, for example, \cite{B01} and \cite[Lemma 2]{B90}).

Throughout the proof of the proposition, we will often use that
\begin{equation} \label{eq:dp}
    dp=O(\log n).
\end{equation}
To see this, observe that $g(d)=dp=d\left(1-(\frac{\phi}{n})^{1/d}\right)$ is increasing as a function of $d\in [1,n-1]$ and thus the maximum is attained at $d=n-1$. For this value of $d$ we have
\[
p=1-\left( \frac{\phi}{n}\right)^{1/(n-1)}=O\left(\frac{\log n}{n}\right),
\]
which implies the claim.

Before we turn to the proof itself, let us give a broad overview of its strategy. We begin by showing that any two vertices of degree less than $k$ in $G_p$ are at distance at least two in $G$. Then, we utilise a sprinkling/double-exposure argument, similar in spirit to the classical argument of Ajtai, Koml\'os, and Szemer\'edi~\cite{AKS81}. We let $p_2=\frac{1}{d}$ and let $p_1$ be such that $(1-p_1)(1-p_2)=1-p$. Note that then $G_p$ has the same distribution as $G_{p_1}\cup G_{p_2}$. Note that using \eqref{eq:dp} it is clear that $p_1$ is of the form $1-\left(\frac{\phi'}{n}\right)^{1/d}$, for some $\phi'$ which tends to infinity with $n$. Further, $p_1$ satisfies that $dp_1=O(\log n)$. We then turn to show that for any set $K\subseteq V$ of order $k$, there are no components in $G_p[V\setminus K]$ (nor in $G_{p_1}[V\setminus K])$ whose order lies in the interval $[2, Cd\log n]$. Finally, we show that for every set $K\subseteq V$ of order $k$, after sprinkling with probability $p_2$, all components in $G_{p_1}[V\setminus K]$ of order at least $Cd \log n$ merge into a unique component.

To the task at hand, let us begin by showing that vertices of degree less than $k$ in $G_p$ are non-adjacent in $G$.
\begin{lemma} \label{l: distance}
\textbf{Whp}, any two vertices of degree less than $k$ in $G_p$ are at distance at least two in~$G$.
\end{lemma}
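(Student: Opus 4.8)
The plan is to bound the expected number of pairs of vertices at distance at most one in $G$ that both have degree less than $k$ in $G_p$, and then apply Markov's inequality.

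First I would set up the first-moment calculation. Fix an edge $uv \in E(G)$ (the case of two vertices at distance exactly one). The events ``$\deg_{G_p}(u) \le k-1$'' and ``$\deg_{G_p}(v) \le k-1$'' are not independent (they share the edge $uv$), but this only helps: conditioning on $uv \notin G_p$ and on the status of the at most $d-1$ other edges at $u$ and the at most $d-1$ other edges at $v$, these edge sets are disjoint, so
\[
\Pr{\deg_{G_p}(u)\le k-1 \text{ and } \deg_{G_p}(v)\le k-1} \le \Pr{\mathrm{Bin}(d-1,p)\le k-1}^2.
\]
Since $dp = O(\log n)$ by \eqref{eq:dp} and $1-p = \left(\tfrac{\phi}{n}\right)^{1/d}$, a standard estimate gives $\Pr{\mathrm{Bin}(d-1,p)\le k-1} \le \binom{d-1}{k-1}p^{k-1}(1-p)^{d-k} = (1+o(1))\frac{(dp)^{k-1}}{(k-1)!}(1-p)^d = O\!\left(\frac{(\log n)^{k-1}\,\phi}{n}\right)$, using that $k$ is constant and $(1-p)^{d-k} = (1+o(1))(1-p)^d = (1+o(1))\frac{\phi}{n}$. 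Hence the probability that both endpoints of a fixed edge have low degree is $O\!\left(\frac{(\log n)^{2(k-1)}\phi^2}{n^2}\right)$. Summing over all $|E(G)| = dn/2$ edges, the expected number of such ``bad'' adjacent pairs is $O\!\left(\frac{d(\log n)^{2(k-1)}\phi^2}{n}\right)$.

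The main obstacle is that this bound is $o(1)$ only when $d$ is not too large relative to $n$: we need $d\,(\log n)^{2(k-1)}\phi^2 = o(n)$. This is where the hypothesis $d \le n^{1-a}$ is essential — it gives $d\,(\log n)^{2(k-1)}\phi^2 \le n^{1-a}\,(\log n)^{2(k-1)}\phi^2 = o(n)$, since $\phi$ and the polylogarithmic factor are all $n^{o(1)}$. (One should be slightly careful to choose, or note, that $\phi$ may be taken to grow slower than any power of $n$; this is consistent with ``$\phi$ tends arbitrarily slowly to infinity''.) With $d\le n^{1-a}$ in hand, Markov's inequality shows that \textbf{whp} there is no edge of $G$ both of whose endpoints have degree at most $k-1$ in $G_p$, which is exactly the claim that any two vertices of degree less than $k$ in $G_p$ are at distance at least two in $G$.

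I would also double-check the one subtlety in the union bound: we are summing over edges of $G$, not over unordered pairs of vertices, so the two cases ``$u,v$ adjacent'' is the only one relevant to ``distance at least two''; vertices at distance $\ge 2$ are precisely those that are not adjacent, so it suffices to rule out adjacent bad pairs, and no separate argument for non-adjacent pairs is needed. Thus the proof reduces to the first-moment computation above together with the binomial tail estimate, both of which are routine given \eqref{eq:dp} and $d\le n^{1-a}$.
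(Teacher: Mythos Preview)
Your approach is essentially identical to the paper's: a first-moment/union-bound argument over the $dn/2$ edges of $G$, bounding the probability that both endpoints have degree below $k$ via the binomial lower tail, and then invoking $d\le n^{1-a}$ to absorb the remaining polylog and $\phi$ factors. One small slip worth fixing: the displayed inequality $\Pr{\mathrm{Bin}(d-1,p)\le k-1}\le \binom{d-1}{k-1}p^{k-1}(1-p)^{d-k}$ is the wrong way round (the right side is a single summand of the left); what you mean is that the sum is $O(\cdot)$ of its largest term since $dp\to\infty$, and similarly $(1-p)^{-k}=n^{k/d}=n^{o(1)}$ rather than $1+o(1)$ when $d=o(\log n)$ --- the paper tracks this as an $n^{o(1)}$ factor. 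Neither point affects the conclusion.
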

\begin{proof}
    Consider any edge $uv$ in $E$.
    The probability that both $u$ and $v$ are vertices of degree less than $k$ in $G_p$ is at most
    \[
    \Pr{Bin(d, p)<k} \Pr{Bin(d-1, p)<k}.
    \]
    
    Let $\mathcal{A}$ be the event that there is an edge of $G$ whose endpoints are vertices of degree less than $k$. Since $G$ is $d$-regular, there are $\frac{dn}{2}$ edges to consider. Then, by the union bound,
    \[
    \Pr{\mathcal{A}} \leq \frac{dn}{2} \Pr{Bin(d, p)<k} \Pr{Bin(d-1, p)<k}.
    \]
    Note that $\Pr{Bin(d, p)<k}=(1+o(1)) \Pr{Bin(d-1, p)<k}$ since $d=\omega(1)$ and $k$ is constant. Thus, 
    \begin{equation*}
        \begin{split}
        \Pr{\mathcal{A}}&\le (1+o(1))\frac{dn}{2}\left(\Pr{Bin(d,p)<k}\right)^2 \\
        &= (1+o(1))\frac{dn}{2}\left(\sum_{i=0}^{k-1}\binom{d}{i}p^i(1-p)^{d-i}\right)^2.
        \end{split}
    \end{equation*}
    Recall that $d \leq n^{1-a}$ for some $a>0$ and $p=1-\left(\frac{\phi}{n}\right)^{1/d}$ for some $\phi$ tending to infinity. Using the bound $\binom{d}{i} \leq d^i$ and \eqref{eq:dp} we obtain
    \begin{equation*}
        \begin{split}
        \Pr{\mathcal{A}}&\le (1+o(1))\frac{dn}{2}\left(O(\log^kn)\cdot (1-p)^{d-k}\right)^2\\
        &=O\left(dn\log^{2k}n\cdot \frac{\phi^{2-2k/d}}{n^{2-2k/d}}\right)\\
        &=O\left(\log^{2k}n\cdot \phi^{2}\cdot n^{-a+o(1)}\right)=o(1). \qedhere
        \end{split}
    \end{equation*}
\end{proof}
We now turn to show a `gap'-like statement in $G_p$. 
\begin{lemma} \label{l: gap}
    \textbf{Whp}, for any set $K\subset V$ with $|K| \leq k$, there are no connected components in $G_p [V\setminus K]$ whose order lies in the interval $[2, Cd\log n]$.
\end{lemma}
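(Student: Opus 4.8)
\medskip
\noindent\textbf{Proof plan.}
The first move is to eliminate the quantifier over $K$. Observe that a set $S\subseteq V$ with $|S|=m$ is a connected component of $G_p[V\setminus K]$ for \emph{some} $K\subseteq V$ with $|K|\le k$ precisely when $G_p[S]$ is connected and $|N_{G_p}(S)|\le k$ (if $S$ is such a component then $N_{G_p}(S)\subseteq K$; conversely take $K\supseteq N_{G_p}(S)$ and pad it to size $k$ with vertices outside $S$, which is possible once $n$ is large). So it suffices to prove that \textbf{whp} no $S$ with $2\le|S|\le Cd\log n$ has $G_p[S]$ connected and $|N_{G_p}(S)|\le k$, and the plan is to do this by a union bound over such $S$, grouped by $m=|S|$.

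For a fixed $S$ with $|S|=m$, write $r:=|N_G(S)|$. Because the edges of $G$ inside $S$ are disjoint from the edges of $G$ leaving $S$, the events ``$G_p[S]$ connected'' and ``$|N_{G_p}(S)|\le k$'' are independent, and I will bound the two probabilities separately. For the first, \Cref{l: trees} and a union bound over spanning trees give $\Pr{G_p[S]\text{ connected}}\le (ed)^{m-1}p^{m-1}$, so summing over all connected $m$-sets of $G$ (each counted through one of its spanning trees) yields $\sum_{|S|=m}\Pr{G_p[S]\text{ connected}}\le n(edp)^{m-1}=n\cdot(O(\log n))^{m-1}$, using \eqref{eq:dp}. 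For the second, $\{|N_{G_p}(S)|\le k\}$ forces some $L\subseteq N_G(S)$ with $|L|\le k$ to be exactly the set of vertices of $N_G(S)$ receiving a present edge from $S$; bounding ``$w$ receives such an edge'' by $p\deg_G(w,S)\le p\min(d,m)$, applying Property~\ref{p: local} (legitimate since $m\le Cd\log n$) in the form $e(S,S^C)\ge(1-\epsilon)dm$, and taking a union bound over $L$, one obtains (for $n$ large, so that $d\ge k/\epsilon$)
\[
\Pr{|N_{G_p}(S)|\le k}\ \le\ \Big(\sum_{j=0}^{k}\big(r\,p\min(d,m)\big)^{j}\Big)(1-p)^{(1-\epsilon)dm-k\min(d,m)}\ \le\ \Big(\sum_{j=0}^{k}\big(r\,p\min(d,m)\big)^{j}\Big)\Big(\frac{\phi}{n}\Big)^{(1-2\epsilon)m},
\]
where I used $(1-p)^d=\phi/n$ and absorbed $k\min(d,m)\le \epsilon dm$.

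The last task is to control the bracket and sum over $m$, which I will split into two regimes using a large constant $m_0=m_0(k)$ with $m_0>(k+1)/(1-3\epsilon)$. For $2\le m<m_0$ one has $\min(d,m)=m$ and $r\le dm$, so $r\,p\min(d,m)\le dm^2p=O(\log n)$, the bracket is $n^{o(1)}$, and (recalling $\phi=n^{o(1)}$) the contribution of this $m$ is at most $n(edp)^{m-1}(\phi/n)^{(1-2\epsilon)m}n^{o(1)}=n^{\,1-(1-2\epsilon)m+o(1)}=o(1)$; there are only $O_k(1)$ such $m$. For $m_0\le m\le Cd\log n$ I only use $r\le n$ and $\min(d,m)\le d$, making the bracket at most $n^{k+o(1)}$, so the contribution of $m$ is at most $n(edp)^{m-1}n^{k+o(1)}(\phi/n)^{(1-2\epsilon)m}\le n^{\,k+1+o(1)}n^{-(1-3\epsilon)m}$ for $n$ large, which is geometric in $m$ with total $O\!\big(n^{\,k+1-(1-3\epsilon)m_0+o(1)}\big)=o(1)$. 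Combining the two regimes and summing over all $m\in[2,Cd\log n]$ will complete the proof.

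I expect the main obstacle to be the following tension, which is exactly what forces the case split. A union bound that first fixes the deleted set $K$ costs a factor $n^{k}$, and for $k\ge2$ this overwhelms the $(\phi/n)^{\Theta(m)}$ saving whenever $m$ is a bounded constant; passing to the intrinsic condition $|N_{G_p}(S)|\le k$ is what removes it. But one still has to pay for the choice of the (at most $k$) boundary vertices of $S$ that survive in $G_p$, and this cost is only $n^{o(1)}$ while $m$ stays bounded -- for unbounded $m$ it is genuinely a polynomial in $n$, so there the estimate can only be saved by the super-polynomial decay $(\phi/n)^{(1-2\epsilon)m}$ coming from the near-optimal edge-expansion of Property~\ref{p: local}. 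The delicate points will be verifying that $e(S,S^C)\ge(1-\epsilon)dm$ indeed cleans the exponent up to $(\phi/n)^{(1-2\epsilon)m}$, and choosing $m_0$ large enough (in terms of $k$ and $\epsilon$) that the two regimes dovetail.
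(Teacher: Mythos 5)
Your proposal is correct, but it reorganises the argument differently from the paper, so let me compare. The paper fixes the deleted set $K$ explicitly (paying a $\binom{n}{k}$ factor) and splits into three ranges: for $s\in[2,k+1]$, where that factor would be fatal, it uses a bespoke trick (two $G_p$-adjacent vertices with at most $4k+2$ present edges leaving the pair, union bound over the $dn/2$ edges of $G$); for $s\in[k+2,\sqrt d]$ it uses the tree count together with the crude degree bound $(1-p)^{s(d-k-s)}$, not invoking Property~\ref{p: local} at all; and only for $s\in[\sqrt d, Cd\log n]$ does it use \ref{p: local}. You instead remove $K$ from the picture by the intrinsic reformulation ``$G_p[S]$ connected and $|N_{G_p}(S)|\le k$'', exploit independence of the inside and boundary edges, invoke \ref{p: local} for every $m\in[2,Cd\log n]$ (legitimate, since it is assumed for all such sets), and pay only for the at most $k$ surviving boundary vertices — a cost of $n^{o(1)}$ while $m=O(1)$ and $n^{k+o(1)}$ otherwise, the latter beaten by the $(\phi/n)^{(1-2\epsilon)m}$ decay once $m\ge m_0(k,\epsilon)$. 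Both routes rest on the same two ingredients, the tree-counting bound of \Cref{l: trees} and $dp=O(\log n)$ from \eqref{eq:dp}, and both implicitly use that $\phi$ grows subpolynomially, as the paper's own estimates do. Your version buys a more unified treatment (two regimes instead of three, no ad hoc argument for tiny components), at the price of the slightly more careful conditioning on which boundary vertices of $S$ survive; just make sure to choose $m_0$ with a little slack above $(k+1)/(1-3\epsilon)$ so the $n^{o(1)}$ factors are absorbed, exactly as you indicate at the end.
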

\begin{proof}
    For $s \in \mathbb{N}$ denote by $\mathcal{A}_s$ the event that there exists $K\subseteq V$, $|K|\le k$, such that $S$ is a connected component of order $s$ in $G_p[V \setminus K]$.
    If $S$ forms a component in $G_p[V \setminus K]$, then $N_{G_p}(S) \subseteq K$.
    We consider three separate cases, according to the order of $S$.

    First, suppose that $s\in [2,k+1]$. Then, there are two vertices in $S$ which are adjacent in $G_p$ and have all their neighbours in $S\cup K$. That is, there is an edge $uv$ in $G_p$, such that $e_{G_p}(\{u,v\}, V \setminus\{u,v\})\le 2|S\cup K|\le 4k+2$. 
    There are less than $dn$ edges to consider, and thus by the union bound and by \eqref{eq:dp}
    \begin{align*}
        \mathbb{P}\left(\cup_{s=2}^{k+1}\mathcal{A}_s\right)&\le k\cdot dn\cdot\Pr{Bin(2d-1,p)\le 4k+2}\\
        &=kdn\sum_{i=0}^{4k+2}\binom{2d-1}{i}p^{i}(1-p)^{2d-1-i}\\
        &\le kdn\cdot O\left(\log^{4k+2}n(1-p)^{2d-3-4k}\right)\\
        &\le O\left(n^{2-a}\cdot \log^{4k+2}n\cdot \frac{\phi^2}{n^{2-7k/d}}\right)\le \frac{1}{n^{a/2}}=o(1).
    \end{align*}

    Now, assume that $s\in [k+2, \sqrt{d}]$. By \Cref{l: trees} there are at most $n (ed)^{s-1}$ ways to choose a tree of order $s$, and it falls into $G_p$ with probability $p^{s-1}$. Each vertex in $S$ has at most $k$ neighbours in $K$ and at most $s$ neighbours in $S$.
    Therefore, using \eqref{eq:dp}, we get
    \begin{align*}
        \Pr{\mathcal{A}_s}&\le \binom{n}{k}n(ed)^{s-1}p^{s-1}(1-p)^{s(d-k-s)}\\
        &\le O(n^{k+1}e^s\log^sn)\cdot (1-p)^{s(d-2\sqrt{d})}
    \end{align*}
    Recall that $p=1-\left(\frac{\phi}{n}\right)^{1/d}$ for some $\phi$ tending to infinity arbitrary slowly. Using $k+2 \leq s \leq \sqrt{d}$, we can bound
    \begin{align*}
        \Pr{\mathcal{A}_s}&\le O(n^{k+1}e^s\log^sn)\cdot \frac{\phi^s}{n^{s\left(1-\frac{2}{\sqrt{d}}\right)}}\\
        &\le O\left(\frac{(e\cdot \phi \cdot \log n)^s}{n^{s\left(1-\frac{2}{\sqrt{d}}\right)-(k+1)}}\right)\\
        &\le O\left(\frac{(e\cdot \phi \cdot \log n)^{k+2}}{n^{(k+2)\left(1-\frac{2}{\sqrt{d}}\right)-(k+1)}}\right),
    \end{align*}
    By the union bound over the less than $\sqrt{d}$ values of $s$, we obtain that
    \begin{align*}
        \mathbb{P}\left(\bigcup_{s=k+2}^{\sqrt{d}}\mathcal{A}_s\right)&\le \sqrt{d}\cdot O\left(\frac{(e\cdot \phi \cdot \log n)^{k+2}}{n^{(k+2)\left(1-\frac{2}{\sqrt{d}}\right)-(k+1)}}\right)=o(1),
    \end{align*}
    where we used $d\le n^{1-a}$. 

    Finally, we are left with $s\in [\sqrt{d}, Cd\log n]$. By property \ref{p: local}, since $s \le Cd\log n$, we have that $e_G(S,S^C)\ge (1-\epsilon)ds$. At most $kd$ of these edges have an endpoint from $K$, and thus
    \[e_{G[V\setminus K]}(S,S^C) \ge (1-\epsilon)ds - dk.\]
    Once again, by \Cref{l: trees} there are at most $n (ed)^{s-1}$ ways to choose a tree of order $s$, and it falls into $G_p$ with probability at most $p^{s-1}$. Therefore,
    \begin{align*}
        \Pr{\mathcal{A}_s}&\le \binom{n}{k}n(edp)^{s-1}(1-p)^{(1-\epsilon)ds-dk}\\
        &\le n^{k+1}\cdot O\left(\frac{(e\cdot \log n\cdot \phi)^s}{n^{(1-\epsilon)s-k-1}}\right)=o(1/n^2),
    \end{align*}
    where the last equality holds since $s\ge \sqrt{d}=\omega(1)$. Union bound over the less than $Cd \log n<n^2$ possible values of $s$ completes the proof.
\end{proof}

We are now ready to prove Proposition \ref{prop: sufficient}.
\begin{proof}[Proof of Proposition \ref{prop: sufficient}]
Recall that $p_2=\frac{1}{d}$, and that $p_1$ satisfies that $(1-p_1)(1-p_2)=1-p$. In particular, $G_p$ has the same distribution as $G_{p_1}\cup G_{p_2}$, and both Lemma \ref{l: distance} and Lemma \ref{l: gap} hold in $G_{p_1}$ as well. Set $G_1\sim G_{p_1}$ and $G_2\sim G_{p_1}\cup G_{p_2}$.

We begin by exposing $G_1$. Let us first show that after sprinkling with $p_2$, there is a unique $k$-connected component whose order is larger than one. To that end, we will show that for any $K\subseteq V$ of order at most $k$, there is a unique component in $G_{2}[V\setminus K]$ whose order is larger than one. By Lemma \ref{l: gap}, we have that \textbf{whp} there are no components in $G_1[V\setminus K]$ whose order is in the interval $[2, Cd\log n]$, and by Lemma \ref{l: distance}, we have that \textbf{whp} any two vertices whose degree is less than $k$ in $G_1$ are non-adjacent in $G$. We continue assuming that these two properties hold deterministically. 

Fix $K\subseteq V$. Let $W$ be set of vertices in components of order at least $Cd\log n$ in $G_{1}[V\setminus K]$, and let $A\sqcup B$ be a partition of $W$ that respects the components of $G_1[V\setminus K]$, that is, each component of $G_1[V\setminus K]$ is completely contained in either $A$ or $B$. We may assume that $|A|\le |B|$. By the above, for every $v\in V\setminus (K\cup W)$, we have that $N_G(v)\subseteq W\cup K$. In particular, either $deg_G(v,A)\ge \frac{d}{2}-k\ge \frac{d}{3}$ or $deg_G(v,B)\ge \frac{d}{3}$.  

Let $A'$ be $A$ together with all vertices in $v\in V\setminus (K\cup W)$, such that $d(v,A)\ge \frac{d}{3}$. Similarly, let $B'$ be $B$ together with all vertices $v\in V\setminus (K\cup W\cup A')$ satisfying $d(v,B)\ge \frac{d}{3}$. By the above, we have that $V\setminus K=A'\sqcup B'$. 

By Property \ref{p: global}, we have that $e_G(A', V\setminus A')\ge c|A'|\ge c|A|$. Hence, $e_G(A', B')\ge c|A|-kd$. Recalling that $|A|\ge Cd \log n$, we have that $e_G(A', B')\ge c|A|/2$. We now turn to exposing edges with probability $p_2$. By Lemma \ref{l: matching}, there exists some absolute constant $\delta>0$ such that with probability at least $1-\exp\left\{-\frac{\delta c|A|}{2d}\right\}$, there exists a matching $M_1$ in $G_{p_2}$ of order at least $\frac{\delta c|A|}{2d}$ between $A'$ and $B'$. Let $A'_M$ denote the endpoints of the matching $M_1$ in $A'$. 
    
Now, either half of the vertices of $A'_M$ are in $A$, or at least half of $A'_M$ are in $A'\setminus A$. In the first case, with probability at least $1-\exp\left\{-\frac{\delta c|A|}{2d}\right\}$ we have at least $\frac{\delta c|A|}{4d}$ edges in $G_{p_2}$ between $B'$ and $A$. Suppose now that we are in the second case. Each of the vertices in $A'_M\setminus A$ has at least $\frac{d}{3}$ neighbours in $A$, and we thus have a set of $|A'_M\setminus A|d/3\ge |A'_M|d/6$ edges between $A'_M$ and $A$. By Lemma \ref{l: matching}, with probability at least $1-\exp\left\{-\frac{\delta |A'_M|}{6}\right\}$ there is a matching $M_2$ between $A'_M$ and $A$ of size at least $\delta|A'_M|/6$.
    
Thus, in both cases, with probability at least $1-\exp\left\{-\frac{\delta^2c|A|}{13d}\right\}$ there are at least $\frac{\delta^2 c|A|}{12d}$ paths of length at most two in $G_{p_2}$ between $B'$ and $A$.
Let $B'_P$ denote the endpoints of these paths in $B'$. Each vertex in $B'_P$ has at least $\frac{d}{3}$ edges to $B$ (see figure \ref{fig:proof notation}). Then, the probability that there is no path between $A$ and $B$ in $G_{p_2}$ is at most \[\exp\left\{-\frac{\delta^2c|A|}{13d}\right\}+(1-p_2)^{\frac{d}{3}|B'_P|}\le \exp\left\{-\frac{\delta^2c|A|}{40d}\right\}.\] Hence, by the union bound, the probability that there is a partition $A\sqcup B$ of $W$ with no paths between $A$ and $B$ in $G_{p_2}[V\setminus K]$ is at most
\begin{align*}
    \sum_{|A|=Cd\log n}^{n}\binom{n/(Cd\log n)}{|A|/(Cd\log n)}\exp\left\{-\frac{\delta^2c|A|}{40d}\right\}&\le \sum_{|A|=Cd\log n}^n\exp\left\{\frac{|A|}{d}\left(\frac{1}{C}-\frac{\delta^2c}{40}\right)\right\}\\
    &\le \sum_{|A|=Cd\log n}^n\exp\left\{-\frac{\delta^2c|A|}{80d}\right\}=o(1/n^k),
\end{align*}
where we assumed that $C$ is sufficiently large with respect to $c$ and $k$. 

Thus, by the union bound over the less than $n^k$ possible choices of $K$, \textbf{whp}, for any choice of $K$ all components of $G_2[V\setminus K]$ of order at least $C d \log n$ merge when we sprinkle with $p_2$, that is, \textbf{whp} in $G_2$ there is a unique $k$-connected component of order at least two and all the other components are vertices of degree less than $k$, which are \textbf{whp} non-adjacent.
\end{proof}

\begin{figure}
    \centering
\begin{tikzpicture}
    \draw[] (0,0) ellipse (4cm and 2cm);
    \node at (-2,0) {$A$};
    \node at (2,0) {$B$};
    \draw[] (4,-2) ellipse (1cm and 0.5cm);
    \node at (4,-2) {$K$};
    \path[draw=black] (0.3,2) -- (0.3,-2);

    \draw[fill=gray!20] (2,1) ellipse (0.5cm and 0.2cm);
    \path[draw=black] (2.3,2) -- (1.48,1.05);
    \path[draw=black] (2.3,2) -- (2.48,1);

    \path[draw=blue, dashed] (-1.3,2.3) -- (0.8,1.5);
    \path[draw=blue] (-1.2,1) -- (0.8,1);
    \path[draw=blue] (-1.2,0.5) -- (0.8,0.5);
    \path[draw=blue, dashed] (-1.9,2.8) -- (2.3,2);
    \draw [blue, loosely dotted] (0,0.2) -- (0,-0.3);
    \node[blue] at (-0.4,0) {$M_1$};

    \path[draw=red, dashed] (-1.3,2.3) -- (-1.5,0.7);
    \path[draw=red, dashed] (-1.9,2.8) -- (-2.15,0.7);
    \path[draw=red] (-2.5,1.9) -- (-2.65,0.7);
    \node[red] at (-3,0.9) {$M_2$};

    \node (W) at (-3.3,2.2) [circle,draw, fill, scale=0.6] {};
    \node (W) at (-1.3,2.3) [circle,draw=blue, fill=blue, scale=0.6] {};
    \node (W) at (-2.5,1.9) [circle,draw, fill, scale=0.6] {};
    \node (W) at (-1.9,2.8) [circle,draw=blue, fill=blue, scale=0.6] {};

    \node (W) at (-1.2,1) [circle,draw=blue, fill=blue, scale=0.6] {};
    \node (W) at (-1.2,0.5) [circle,draw=blue, fill=blue, scale=0.6] {};

    \node (W) at (0.8,1.5) [circle,draw=gray, fill=gray, scale=0.6] {};
    \node (W) at (0.8,1) [circle,draw=gray, fill=gray, scale=0.6] {};
    \node (W) at (0.8,0.5) [circle,draw=gray, fill=gray, scale=0.6] {};

    \node (W) at (2.3,2) [circle,draw=gray, fill=gray, scale=0.6] {};
    \node (W) at (1,2.8) [circle,draw, fill, scale=0.6] {};
    \node (W) at (3.5,2.3) [circle,draw, fill, scale=0.6] {};

    \path[draw=black, rounded corners] (0.3,2) -- (0.3,3.2) -- (-4,3.2) -- (-4,0);
    \path[draw=black, rounded corners] (0.3,2) -- (0.3,3.2) -- (4,3.2) -- (4,0);

    \node at (-2,3.5) {$A'$};
    \node at (2,3.5) {$B'$};
\end{tikzpicture}
\caption{The matching $M_1$ (blue) in $G_{p_2}$ connects the sets $A'$ and $B'$. If many vertices in $A'_M$ (blue) lie in $A' \setminus A$, there is a matching $M_2$ in $G_{p_2}$ (red) between $A$ and $A' \setminus A$ whose edges together with those of $M_1$ form many paths of length two in $G_{p_2}$ (dashed) between $A$ and $B'$. The endpoints of these paths lie in $B'_P$ (gray). Note that each vertex in $B' \setminus B$ has $\Omega(d)$ neighbours in $B$ (and likewise for vertices in $A' \setminus A$ and $A$).}
    \label{fig:proof notation}
\end{figure}
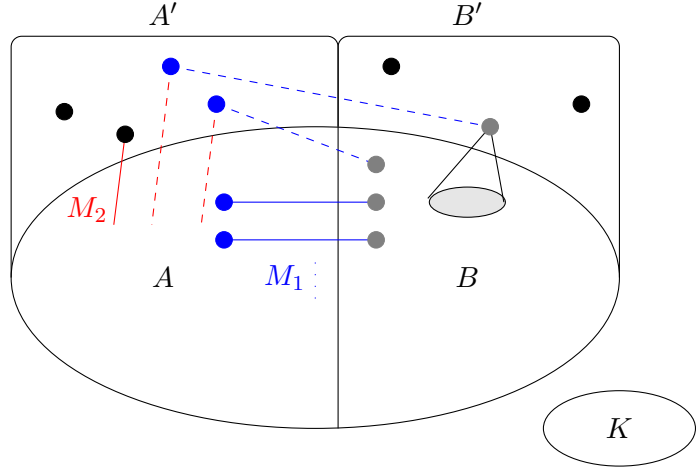

\section{Proof of Theorem \ref{th: tight}}\label{s: tight}

The proof follows a construction from \cite{DEKKIso23}.
\begin{proof}[Proof of \Cref{th: tight}]
Let $\epsilon >0$ and $d=\omega(1)$ be given.
Choose $n\ge d^2$ such that $n$ is divisible by $d+2$. Set $d_1=\frac{10}{19}\cdot d$. We construct the graph as follows.

Let $H$ be a $(\frac{n}{d+2}, d_1, \lambda)$-graph with $\lambda=O(\sqrt{d_1})$ (a typical random $d_1$-regular graph satisfies this, see for example \cite{S23} and reference therein).
Furthermore, for each $v \in V(H)$ let $K(v)$ be a clique on $d+1$ vertices and let $M(v) \subseteq E(K(v))$ be a matching of size $\frac{d-d_1}{2}$ in $K(v)$. Set $F(v)$ to be the graph whose vertex set is $V(K(v))$, and whose edge set is $E(K(v))\setminus M(v)$.
Let $G$ be the graph whose vertex set is $V(H) \cup \bigcup_{v \in V(H)} V(F(v))$, and whose edge set is $E(H)\cup\bigcup_{v\in V(H)}E(F(v))$ together with the edges from every $v \in V(H)$ to the endpoints of $M(v)$.

Note, that $G$ is a $d$-regular graph on $n$ vertices.
Set $c_1=1/10$ and $c_2=\epsilon/3$.
By construction, $H$ is an expander (in fact, a pseudo-random graph) and thus by the expander mixing lemma~\cite{AC88}, for $U \subseteq V(H)$ with $|U|\leq \frac{n}{2(d+2)}$, 
\begin{equation} \label{expansionH global}
    e_H(U, U^C) \geq \frac{d_1}{3} |U|.
\end{equation}
Moreover, since $d\le \sqrt{n}$ we have that $c_2d\le \frac{\epsilon n}{d+2}=\epsilon|V(H)|$. Thus, again by the expander mixing lemma, for $U\subseteq V(H)$ with $|U|\leq c_2 d$,
\begin{equation} \label{expansionH local}
    e_H(U, U^C) \geq (1-\epsilon) d_1 |U|.
\end{equation}
We use the following claim from \cite{DEKKIso23} about the expansion of $G$.

\begin{claim}[Claim 5.1 in \cite{DEKKIso23}]
For $U \subseteq V(G)$ with $|U| \leq n/2$,
\[
e(U, U^C) \geq c_1|U|=  \frac{|U|}{10}.
\]
\end{claim}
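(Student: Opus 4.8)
The plan is to prove the claim (Claim 5.1 from \cite{DEKKIso23}) for the concrete graph $G$ constructed above, by splitting an arbitrary $U \subseteq V(G)$ with $|U| \le n/2$ into its intersection with $V(H)$ and its intersections with the various "gadgets" $V(F(v))$, and bounding the boundary edges coming out of each piece separately. Write $U_0 = U \cap V(H)$, and for each $v \in V(H)$ write $U_v = U \cap V(F(v))$; the pieces $\{v\} \cup V(F(v))$ for $v\in V(H)$ partition $V(G)$, and within each such block the graph $G$ restricted to it is $F(v)$ plus the vertex $v$ joined to the $2\cdot\frac{d-d_1}{2} = d-d_1$ endpoints of $M(v)$ — call this block graph $B(v)$, a connected graph on $d+2$ vertices which is highly connected (it is $K_{d+1}$ with a perfect-ish matching removed, plus an apex of degree $d-d_1$).

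First I would handle the "many full or nearly-full blocks" regime. Call a block $v$ \emph{heavy} if $|U \cap (\{v\}\cup V(F(v)))| \ge 1$ but the block is not entirely contained in $U$; call it \emph{full} if the whole block lies in $U$, and \emph{empty} otherwise. For a heavy-but-not-full block, $B(v)$ contributes $\Omega(d)$ edges of $e(U,U^C)$ internally, in fact at least $d - d_1 = \Omega(d)$ by the near-clique structure (removing one vertex from a block on $d+2$ vertices that is $K_{d+1}$ minus a matching still leaves edge-connectivity $\Omega(d)$ between the in-part and out-part). So if there are at least, say, $|U|/(10(d+2))$ heavy blocks, we already get $e(U,U^C) = \Omega(|U|)$ with a constant comfortably above $1/10$, and we are done. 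Hence I may assume almost all of $U$ is accounted for by \emph{full} blocks, i.e. blocks completely inside $U$. Let $T \subseteq V(H)$ be the set of $v$ whose block is full; then $|U| \le (d+2)|T| + \text{(small error from heavy blocks)}$, and in the main case $|T| \le n/(2(d+2)) = |V(H)|/2$ (roughly), so that expansion estimate \eqref{expansionH global} applies: $e_H(T, T^C) \ge \frac{d_1}{3}|T|$. Every $H$-edge from $T$ to $T^C$ is an edge of $G$ with exactly one endpoint in $U$ unless its other endpoint also lies in a full block — but by definition $T^C$ consists of $v$ whose blocks are not full, and a vertex $v \in T^C$ lies in $U_0$ or not; if $v \notin U_0$ then the edge $vw$ with $w\in T$ crosses $U$. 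The subtlety is edges of $H$ landing on a heavy block's apex vertex that happens to be in $U$. I would absorb these into the heavy-block count: each heavy block can "neutralise" at most $d_1$ such $H$-edges, but it also pays $\Omega(d)$ internal crossing edges, so the bookkeeping still closes with room to spare.

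The main obstacle, and the place to be careful, is the boundary case where $|T| > |V(H)|/2$, so that the global expansion of $H$ only gives $e_H(T,T^C) \ge \frac{d_1}{3}|T^C|$ rather than $\ge \frac{d_1}{3}|T|$ — which could be useless if $T^C$ is tiny. But here $T$ huge means $U$ contains almost all of $\bigcup_v V(F(v))$, and since $|U|\le n/2$ while the gadgets alone have $\frac{(d+1)n}{d+2} = (1-o(1))n$ vertices, $U$ must be missing a constant fraction of the gadget vertices; those missing gadget vertices lie in blocks that are therefore not full, i.e. in $T^C$ — but a single non-full block contributes only $d+1$ missing vertices, so $|T^C| \ge \Omega(n/(d+2)) = \Omega(|V(H)|)$, contradicting $|T|>|V(H)|/2$ only if... actually it shows $T^C$ is a constant fraction of $V(H)$, so $e_H(T,T^C)\ge \frac{d_1}{3}|T^C| = \Omega(n)= \Omega(|U|)$ and we again win. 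So the real content is just this careful case analysis plus the observation that near-clique blocks are internally $\Omega(d)$-edge-connected; I would present it as: (i) blocks are $\Omega(d)$-edge-connected internally; (ii) if $\ge \epsilon' |U|/(d+2)$ blocks are heavy, done by (i); (iii) else most of $U$ is full blocks forming $T\subseteq V(H)$, and either $|T|\le |V(H)|/2$ so \eqref{expansionH global} gives it directly, or $|T|>|V(H)|/2$ forces $|T^C|=\Omega(|V(H)|)$ by the size constraint $|U|\le n/2$, which gives it via the $|T^C|$-form of expansion. Since this is quoted verbatim as Claim 5.1 of \cite{DEKKIso23}, I would in the write-up simply cite that reference and sketch this decomposition for the reader's convenience rather than reproving it in full.
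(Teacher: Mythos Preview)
The paper does not give its own proof of this claim at all: it is stated verbatim as Claim~5.1 of \cite{DEKKIso23} and simply invoked by citation. Your closing remark---that in the write-up you would ``simply cite that reference''---is therefore exactly what the paper does, and in that sense your proposal matches the paper's treatment.

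Your proof sketch goes beyond what the paper offers. The decomposition into full, heavy, and empty blocks, together with the observation that each block $B(v)$ is $(d-d_1)$-edge-connected and that the apex set of full blocks inherits expansion from $H$ via \eqref{expansionH global}, is indeed the natural line of argument (and is, in spirit, how \cite{DEKKIso23} proceeds). One caution if you ever do write it out: the particular threshold ``$|U|/(10(d+2))$ heavy blocks'' you suggest does \emph{not} give a constant comfortably above $1/10$, since $(d-d_1)\cdot \tfrac{|U|}{10(d+2)} \approx \tfrac{9}{190}|U| < \tfrac{|U|}{10}$; you would need to set the heavy/full threshold a bit higher (around $0.22\,|U|/(d+2)$) and track the interaction with the $d_1 h_1$ ``neutralised'' $H$-edges more carefully for the constants to close at $c_1 = 1/10$. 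The structure of your case analysis is sound; only the numerical bookkeeping would need tightening.
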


Furthermore, we claim that small sets expand even better.
\begin{claim}
For $U \subseteq V(G)$ with $|U| \leq c_2 d$,
\[
e(U, U^C) \geq (1- \epsilon) d|U|.
\]
\end{claim}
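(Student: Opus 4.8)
The plan is to show that a small set $U \subseteq V(G)$ with $|U| \le c_2 d = \epsilon d / 3$ has good edge-expansion by splitting $U$ according to the "gadget" structure of $G$. Write $U_H = U \cap V(H)$ and, for each $v \in V(H)$, write $U_v = U \cap V(F(v))$; thus $|U| = |U_H| + \sum_v |U_v|$. First I would handle the contribution from each clique-gadget $F(v)$ separately: since $|U_v| \le |U| \le \epsilon d/3 \le (d+1)/2$ (using $\epsilon$ small), the set $U_v$ occupies at most half of the $(d+1)$-vertex near-clique $F(v)$, so a standard computation gives that the number of edges of $F(v)$ leaving $U_v$ inside $K(v)$ is at least $(|V(F(v))| - |U_v|) \cdot |U_v| - |U_v| \ge (d - |U_v|)|U_v| \ge (1-\epsilon/3) d |U_v|$, where the $-|U_v|$ accounts for the missing matching edges $M(v)$ and I used $|U_v| \le \epsilon d/3$. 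These edges either leave $U$ entirely, or land in $U_v$'s complement within $V(F(v)) \cup \{v\}$; in any case each such edge is counted once in $e_G(U, U^C)$ unless both endpoints lie in $U$, but both endpoints in $F(v)$ being in $U$ would mean the edge is internal to $U_v$ — which it is not — so these are genuine boundary edges (possibly shared accounting with $v \in U_H$ is handled below). Summing over $v$ gives a contribution of at least $(1-\epsilon/3) d \sum_v |U_v|$ toward $e_G(U, U^C)$, minus a correction of at most $|U_H| \cdot (d-d_1)/2$ for edges that go from some $U_v$ to $v \in U_H$ (each such $v$ receives $(d-d_1)/2$ edges from its gadget).

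Next I would bound the contribution of $U_H$. By \eqref{expansionH local}, since $|U_H| \le |U| \le c_2 d$, we have $e_H(U_H, U_H^C) \ge (1-\epsilon) d_1 |U_H|$, and these are all edges of $G$ with exactly one endpoint in $U_H \subseteq U$; an edge of $H$ between two vertices of $U_H$ would have to be internal to $U$, so again all $e_H(U_H, U_H^C)$ edges are boundary edges of $U$ in $G$, up to possibly being cancelled when the $H$-neighbour lies in $U$ — but it does not, since $U \cap V(H) = U_H$. So $U_H$ contributes at least $(1-\epsilon)d_1 |U_H|$. In addition, each $v \in U_H$ sends $d - d_1$ edges to the endpoints of $M(v)$ in $F(v)$; at least $(d-d_1) - 2|U_v| \ge (d-d_1) - (2\epsilon/3) d$ of these go to $V(F(v)) \setminus U_v$, hence leave $U$. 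I would combine: these edges can be used to cover the correction term from the previous paragraph (which was at most $|U_H|(d-d_1)/2$) and still leave a surplus, since $(d-d_1) - (2\epsilon/3)d \ge (d-d_1)/2$ when $d_1 = \tfrac{10}{19} d$ and $\epsilon$ is small.

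Putting the pieces together, $e_G(U, U^C) \ge (1-\epsilon/3) d \sum_v |U_v| + (1-\epsilon) d_1 |U_H|$, and I must check this is at least $(1-\epsilon) d |U| = (1-\epsilon) d (|U_H| + \sum_v |U_v|)$. The clique part is fine since $(1-\epsilon/3) d \ge (1-\epsilon) d$. The delicate part is the $U_H$ vertices: $(1-\epsilon)d_1 |U_H|$ falls short of $(1-\epsilon)d|U_H|$ by $(1-\epsilon)(d-d_1)|U_H|$, and this deficit must be made up precisely by the surplus edges from each $v \in U_H$ into its own gadget $F(v)$ — there are $(d-d_1) - 2|U_v|$ such surplus boundary edges per $v \in U_H$, and since $\sum_{v\in U_H} 2|U_v| \le 2|U| \le (2\epsilon/3)d$ is small while these edges number about $(d-d_1)|U_H|$ in total, the bookkeeping closes with room to spare. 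The main obstacle is this accounting: making sure no edge is double-counted (an edge from $v \in U_H$ to its gadget, versus a boundary edge of $U_v$) and that the matching-deficiency in each $F(v)$ is correctly charged; I would organize it by writing $e_G(U,U^C)$ as a sum over the three edge-types (inside gadgets, inside $H$, between $H$ and gadgets) and lower-bounding each type with the disjointness of the three classes making the count clean.
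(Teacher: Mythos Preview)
Your decomposition is exactly the one the paper uses: partition $U$ into $U_H = U \cap V(H)$ and the pieces $U_v = U \cap V(F(v))$, and lower-bound separately the boundary edges inside $H$, inside each $F(v)$, and the ``cross'' edges between each $v\in U_H$ and its gadget $F(v)$. These three edge classes are disjoint and the plan is sound.

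However, your bookkeeping does not close as written, and the source is a spurious correction term. The edges you count in the first paragraph are edges of $F(v)$ between $U_v$ and $V(F(v)) \setminus U_v$; since $v \notin V(F(v))$, none of these has $v$ as an endpoint, so none can land in $U_H$ and no correction is needed. (Incidentally, each $v \in V(H)$ sends $d - d_1$ edges to $F(v)$, not $(d-d_1)/2$: the matching $M(v)$ has $(d-d_1)/2$ edges, hence $d-d_1$ endpoints.) Once you drop the correction, the cross edges from $v \in U_H$ only need to cover the deficit $(1-\epsilon)(d-d_1)|U_H|$; here use the tighter count $(d-d_1) - |U_v|$ for the edges from $v$ to $F(v) \setminus U_v$ (not $(d-d_1) - 2|U_v|$), and verify the inequality vertex-by-vertex as the paper does: $d - d_1 - |U_v| \ge (1-\epsilon)(d-d_1)$ holds because $|U_v| \le \epsilon d/3 < \epsilon \cdot \tfrac{9}{19}d = \epsilon(d-d_1)$. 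With your looser bounds, the final claim that the bookkeeping ``closes with room to spare'' actually fails when $|U_H| = 1$: you would need $\epsilon(d-d_1)\cdot 1 \ge (2\epsilon/3)d$, i.e.\ $\tfrac{9}{19} \ge \tfrac{2}{3}$, which is false.
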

\begin{proof}
    Let $U_H=U \cap V(H)$ and $U_{F(v)}= U \cap V(F(v))$ and note that $U= U_H \sqcup \bigsqcup_{v \in V(H)} U_{F(v)}$ is a partition of $U$.
    
   By \eqref{expansionH local}, since $|U_H|\le |U| \leq c_2 d$,
    \[
    e_H(U_H, U_H^C) \geq (1-\epsilon) d_1 |U_H|.
    \]
    For every $v\in V(H)$, each $v \in U_H$ sends at least $d-d_1-|U_{F(v)}|$ edges to $F(v) \setminus U_{F(v)}$. Further, for every $v\in V(H)$, each vertex in $U_{F(v)}$ sends at least $d-|U_{F(v)}|-1$ edges to vertices in $F(v) \setminus U_{F(v)}$. Hence, recalling that $|U_{F(v)}| \leq c_2 d = \epsilon d/3$, we get
    \[
    e_{F(v)}(U_{F(v)}, U_{F(v)}^C) \geq |U_{F(v)}| (d-|U_{F(v)}|-1) \geq (1-\epsilon) d |U_{F(v)}|.
    \]
   Altogether we obtain,
    \begin{align*}
        e(U, U^C) &\geq e_H\left(U_H, U_H^C\right) + \sum_{U_{F(v)} \neq \varnothing} e_{F(v)}\left(U_{F(v)}, U_{F(v)}^C\right) + e_G\Bigl(U_H, \bigcup_{v \in U_H} (F(v) \setminus U_{F(v)})\Bigr)\\
        & \geq |U_H|(1-\epsilon) d_1 + \sum_{U_{F(v)} \neq \varnothing}(1-\epsilon) d |U_{F(v)}| + \sum_{v \in U_H} (d-d_1-|U_{F(v)}|)\\
        & \geq |U_H|(1-\epsilon) d_1 + |U\setminus U_H| (1- \epsilon) d + |U_H| (1-\epsilon)(d - d_1)\\
        & \geq |U| (1-\epsilon) d. \qedhere
    \end{align*}
\end{proof}

It remains to show that, \textbf{whp}, $\tau_1 < \tau_{conn}$ (in fact, we will show that the probability thresholds are different). Let $p \in (0,1)$ such that
\[
(1-p)^d = \frac{1}{n \log d}.
\]
Then, in $G_p$ the expected number of isolated vertices is
\[
n (1-p)^d = \frac{1}{\log d}=o(1),
\]
and thus \textbf{whp} the minimum degree in $G_p$ is at least one.

Now, let $X$ be the number of $v \in V(H)$ such that $F(v)$ is disconnected from $H$ in $G_p$. Note that each $F(v)$ is connected to $H$ via $d-d_1$ edges in $G$, and that for each $v \in V(H)$, the events that $F(v)$ is disconnected from $H$ are independent. Thus,
\begin{align*}
    \Pr{X= 0}&= \left(1-(1-p)^{d-d_1}\right)^{n/(d+2)}\\
    &\le \exp\left\{-\left(\frac{1}{n \log d}\right)^{9/19}\cdot \frac{n}{d+2}\right\}=o(1),
\end{align*}
where in the last equality we used $d\le \sqrt{n}$. Thus, \textbf{whp}, there is at least one $v \in V(H)$ such that $F(v)$ is disconnected from $H$ in $G_p$, and therefore \textbf{whp} $G_p$ is disconnected.
\end{proof}

\section{Discussion}\label{s: discussion}
We proved sufficient conditions on a regular graph $G$, guaranteeing that in the random graph process on $G$ the hitting times of minimum degree $k$ and $k$-connectedness coincide (for any constant $k$). In particular, these conditions cover pseudo-random $(n,d,\lambda)$-graphs and regular Cartesian product graphs with base graphs of bounded order (such as the $d$-dimensional binary hypercube), and confirm a conjecture of Joos \cite{J15}. We further demonstrated the tightness of our assumptions, in the sense that there are $d$-regular graphs where Property \ref{p: local} holds in a weaker form, and typically the hitting time of $k$-connectedness does not coincide with that of minimum degree $k$ (in fact, the probability threshold is different). It would be interesting to determine how tight our assumptions are: can one remove the $O(\log n)$ factor in \ref{p: local}, and thus match the local edge-expansion as in the construction given in Section \ref{s: tight}? 

The broad strategy used here can also be found in several proofs addressing the uniqueness of the giant component in the supercritical regime, see \cite{AKS81,CDE24,DEKK22, L22}. Let us stress again that all that is needed for the proof of our result is some knowledge of the edge isoperimetric profile of the graph. Indeed, as noted in the introduction, Properties \ref{p: global} and \ref{p: local} from \Cref{th: sufficient} resemble similar assumptions in \cite{DK2401,DK2402}, under which the uniqueness of the giant component in a general setting was shown (and which were shown to be essentially tight).

Further, consider an $n$-vertex graph $G$ with minimum degree $d$ and maximum degree $\Delta=O(d)$, with $d\le n^{1-a}$ for some constant $a>0$. If $G$ has Property \ref{p: global}, and satisfies that for every $U\subseteq V(G), |U|\le Cd\log n$ we have that $|E(G[U])|\le \epsilon d |U|$ for large enough $C$ (with respect to $c,k$ and $\Delta/d$), it follows from the above proof of Theorem \ref{th: sufficient} that \textbf{whp} the hitting time of minimum degree at least $k$ and of $k$-connectedness are the same. It would be interesting to see whether the same holds without the assumption that $\Delta=O(d)$, that is, with only the assumption of minimum degree.

In \cite{DG24}, the authors showed that for regular Cartesian product graphs (where the base graphs are connected and of bounded order), the hitting times of minimum degree one, connectedness and the existence of a nearly perfect matching (that is, a matching missing at most one vertex) coincide \textbf{whp}, generalising a result of Bollob\'as \cite{B90} on the hypercube. The existence of a perfect matching was shown using the product structure of the graph quite explicitly. Hence, the following is quite natural to ask.
\begin{question}
Given $k \in \mathbb{N}$, what minimal assumptions on $G$ suffice to guarantee that in the random graph process on $G$ the hitting times of minimum degree $k$ and the existence of $k$ disjoint perfect matchings coincide \textbf{whp}?
\end{question}

\subsection*{Acknowledgements}
The authors would like to thank Yoshiharu Kohayakawa for proposing several questions regarding the results from \cite{DG24} that led to this project. 
The authors would like to express their sincere gratitude to their supervisors Joshua Erde, Mihyun Kang, and Michael Krivelevich for their invaluable guidance, support, and encouragement throughout the course of this research.
The second author was supported in part by the Austrian Science Fund (FWF) [10.55776/DOC183].

\bibliographystyle{abbrv} 
\bibliography{perc} 

\end{document}